\theoremstyle{definition}
\newtheorem{definition}{Definition}[section]
\theoremstyle{plain}
\newtheorem{theorem}[definition]{Theorem}
\newtheorem*{mtheorem}{Theorem}
\newtheorem*{mcorollary}{Corollary}
\newtheorem{corollary}[definition]{Corollary}
\newtheorem{lemma}[definition]{Lemma}
\newtheorem{proposition}[definition]{Proposition}
\theoremstyle{remark}
\newtheorem{remark}[definition]{Remark}
\newcommand{\OO}{\mathscr{O}}
\newcommand{\PP}{\mathbb{P}}
\newcommand{\mscr}[1]{\mathscr{#1}}
\newcommand{\OsC}{\mathscr{O}_S(C)}
\newcommand{\OsH}{\mathscr{O}_S(H)}
\newcommand{\OsN}{\mathscr{O}_S(N)}
\newcommand{\Sing}{\text{\upshape{Sing}}}
\newcommand{\im}{\text{\upshape{Im}}}
\newcommand{\Pic}{\text{\upshape{Pic}}}
\newcommand{\cE}{\mathscr{E}}
\newcommand{\PPE}{{\PP(\cE)}}
\newcommand{\cI}{\mathscr{I}}
\newcommand{\cH}{\mathscr{H}}
\newcommand{\cF}{\mathscr{F}}
\newcommand{\cM}{\mathcal{M}}
\newcommand{\cP}{\mathscr{P}}
\newcommand{\cHom}{\mathscr{H}\!om}
\newcommand{\cEnd}{\mathscr{E}\!nd}
\newcommand{\rk}{\text{\upshape{rk}}}
\newcommand{\cW}{\mathcal{W}}
\newcommand{\hh}{\mathfrak{h}}
\newcommand{\nn}{\mathfrak{n}}
\newcommand{\subjclass}[2][2010]{%
  \let\@oldtitle\@title%
  \gdef\@title{\@oldtitle\footnotetext{#1 \emph{Mathematics subject classification:} #2}}%
}
\newcommand{\keywords}[1]{%
  \let\@@oldtitle\@title%
  \gdef\@title{\@@oldtitle\footnotetext{\emph{Key words and phrases:} #1.}}%
}
\newcommand{\Michael}[1]{}
\newcommand{\Christian}[1]{}
\begin{document}

\title{Moduli of Lattice Polarized K3 Surfaces via Relative Canonical Resolutions}
\author{Christian Bopp and Michael Hoff}
\date{\today}


\keywords{K3 surfaces, moduli, unirationality, relative canonical resolution}
\subjclass{14J28, 14D20, 14H45, 13D02 }
\maketitle 

\begin{abstract}
For a smooth canonically embedded curve $C$ of genus $9$ together with a pencil $|L|$ of degree $6$, we study the relative canonical resolution of $C\subset X\subset \Bbb P^8$, where $X$ is the scroll swept out by the pencil $|L|$. 
We show that the second syzygy bundle in this resolution of $C\subset X$ is unbalanced.
The proof reveals a new geometric connection between the universal Brill--Noether variety $\mathcal{W}^1_{9,6}$ and a moduli space  $\mathcal{F}^\hh$ of lattice polarized $K3$ surfaces (for a certain rank $3$ lattice $\mathfrak{h}$).
As a by-product we prove the unirationality of $\mathcal{F}^\mathfrak{h}$ and show that $\mathcal{W}^1_{9,6}$ is birational to a projective bundle over a moduli space of lattice polarized $K3$ surfaces $\mathcal{F}^{\mathfrak{h}'}$ for a certain rank $4$ lattice $\mathfrak{h}'$ which contains $\mathfrak{h}$ as a sublattice.
\end{abstract}


\section{Introduction}

We consider a canonically embedded curve $C\subset \PP^{g-1}$ of genus $g$ together with base point free complete pencil $|L|=g^1_k$ of degree $k$. The linear span of divisors in the pencil $|L|$ sweeps out a rational normal scroll $X=\bigcup_{D\in |L|}\overline{D} \subset \PP^{g-1}$ of dimension $k-1$ and degree $g-k+1$. To the scroll one naturally associates a projective bundle $\pi:\PPE\to \PP^1$. Our main object of study is the minimal free resolution of $C\subset \PPE$ in terms of $\OO_\PPE$-modules.
This resolution was introduced in \cite{Sch} (see also Theorem \ref{relCanRes}) and we refer to it as the \textit{relative canonical resolution} (RCR).
 The bundles $F_i$ appearing in the RCR are isomorphic to bundles $N_i$ over $\PP^1$ (up to twists). The degree and rank of the bundles $N_i$ are known, but the splitting types for general pairs $(C,L)$ are only known in a very few cases (see \cite{B}, \cite{DP}). Naively, one expects that the splitting type is as balanced as possible (that is, the first cohomology group of the endomorphism bundle vanishes). For a more detailed conjecture of the behaviour of the splitting type see \cite{BH, BH21}. 

Knowing the generic splitting type of the bundles appearing in the RCR one can study the subloci of the Hurwitz space $\cH_{g,k}$ for which the splitting type differs from the generic one (see \cite{Pat}). On the other hand one can use the RCR to construct the canonical resolution of $C\subset \PP^{g-1}$ (see \cite{Sch} and \cite{Sag}).

In order to show that the general pair $(C,L)$ has a balanced RCR it is sufficient (by semi-continuity) to prove the existence of one example. This can for instance be done by degeneration techniques (see \cite{BP}) or computer algebra (see \cite{BH, BH21}).

In this article we treat the case of genus $9$ curves together with a pencil of degree $6$. 
Computer algebra experiments indicate that such pairs have an unbalanced second syzygy bundle $N_2$ (see Proposition \ref{existenceExample} for the precise structure of the RCR).
In this case the usual semi-continuity argument does not show that second syzygy bundle $N_2$ is  generically unbalanced. We will show the following.
\begin{mtheorem}[see Corollary \ref{allUnbalanced}]
 For any $(C,L)\in \cW^1_{9,6}$ the relative canonical resolution has an unbalanced second syzygy bundle.
\end{mtheorem}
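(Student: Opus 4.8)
The statement is a \emph{closed} condition on $\cW^1_{9,6}$, so the single unbalanced example of Proposition~\ref{existenceExample} cannot yield it by the naive semicontinuity argument (as emphasised above). The plan is therefore twofold: first reduce ``for every pair'' to ``for the generic pair'' by using semicontinuity in the opposite direction to the usual one, and then establish the generic statement through the $K3$ geometry. The entire weight of the proof will rest on the second step.

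For the reduction, the plan is to observe that $(C,L)\mapsto h^1(\cEnd N_2)$ is upper semicontinuous on $\cW^1_{9,6}$ and that $N_2$ is balanced exactly when this function vanishes; hence the balanced locus is open. Since $\cW^1_{9,6}$ is irreducible (as $\rho(9,1,6)=1$, the general fibre of the map to $M_9$ is irreducible), every nonempty open subset is dense and contains the generic point. Thus, once the generic pair is known to be unbalanced, the balanced locus is an open set missing the generic point, hence empty, and \emph{every} pair is unbalanced. This is the point where the argument runs contrary to the standard one: we do not propagate balancedness inward from a special example, but propagate unbalancedness outward from the generic point.

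For the generic statement I would first realise the general pair geometrically. Using the birational model of $\cW^1_{9,6}$ as a $\PP^9$-bundle over $\cF^{\hh'}$, the general $C$ is a hyperplane section $C=S\cap H$ of a lattice polarized $K3$ surface $S\subset\PP^9$ of genus $9$, with $L=\OO_S(E)|_C$ for an elliptic pencil $|E|$ on $S$ satisfying $E\cdot H=6$ and $E^2=0$. The spans of the fibres of $|E|$ sweep out a six-dimensional scroll $Y\subset\PP^9$ with $S\subset Y$, and $X=Y\cap H$ is precisely the five-dimensional scroll swept out by $L$, with $C=S\cap H\subset Y\cap H=X$. For general $H$ the intersections are transverse and minimality is preserved, so restricting the relative canonical resolution of $S\subset Y$ to $X$ produces that of $C\subset X$; as both scrolls fibre over the same $\PP^1$ and the defining twists restrict compatibly, the syzygy bundles over $\PP^1$ are unchanged. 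In particular the bundle $N_2$ for $C\subset X$ coincides with the second syzygy bundle of $S\subset Y$.

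It then remains to show that this bundle is unbalanced on the $K3$ side, and this I expect to be the main obstacle. The extra geometry recorded by the rank-$3$ sublattice $\hh\subset\hh'$ should enter here: the distinguished curve class it provides on $S$ yields a relative syzygy of exceptional degree, while the triviality $\omega_S\cong\OO_S$ makes the relative canonical resolution of $S\subset Y$ self-dual, forcing $N_2\cong N_2^\vee\otimes\OO_{\PP^1}(m)$ and hence a symmetric splitting type. The exceptional summand together with this symmetry spreads the degrees of $N_2$ by at least $2$, incompatibly with balance, and the resulting splitting type is the one recorded in Proposition~\ref{existenceExample}. Because the relevant $K3$ surfaces vary in the irreducible family $\cF^{\hh}$ and the construction is uniform, the splitting type of $N_2$ is constant on a dense open subset of $\cW^1_{9,6}$, so it suffices to evaluate it on the explicit example. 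The hard part will be exactly this last point: proving that the lattice class $\hh$ \emph{forces} rather than merely permits the imbalance, i.e. producing the exceptional syzygy and ruling out its absorption into a balanced, self-dual splitting.
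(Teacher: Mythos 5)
Your reduction step is exactly the paper's: balancedness of $N_2$ is an open condition by semicontinuity of $h^1(\PP^1,\cEnd(N_2))$, and $\cW^1_{9,6}$ is irreducible, so generic unbalancedness empties the balanced locus. The problem is everything after that. First, your geometric realization of the general pair is circular: the statement that $\cW^1_{9,6}$ is birational to a $\PP^9$-bundle over $\cF^{\hh'}$ (equivalently, that the general $(C,L)$ is a hyperplane section of a lattice polarized $K3$ surface with $L$ cut out by an elliptic pencil) is Theorem \ref{birationalityTheorem} of the paper, and its proof runs through Theorem \ref{mainthm} and Corollary \ref{unirationalityTheorem}, which in turn rest on the very unbalancedness mechanism you are trying to establish. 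You cannot invoke it as an input. Establishing that the general $(C,L)\in\cW^1_{9,6}$ lies on such a $K3$ surface at all is the hardest part of the paper: it requires one explicit example, produced as the syzygy scheme of a linear syzygy in the relative canonical resolution and verified by \emph{Macaulay2} (Proposition \ref{existenceExample}), together with a bound on the fiber dimension of the map from $K3$ pairs to $\cW^1_{9,6}$ obtained from the Buchsbaum--Eisenbud Pfaffian structure of the codimension-$3$ resolution of $S\subset\PPE$ (Lemma \ref{bound_fiber_dimension}), before a dimension count yields dominance. A dimension count alone, which is all your setup supplies, does not.

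Second, the step you yourself flag as ``the hard part'' --- producing the exceptional degree-$2$ summand of $N_2$ --- is precisely where the content lies, and your sketch does not supply it. The paper's mechanism is concrete and different from your lift to a six-dimensional scroll: a genus-$8$ $K3$ surface $S$ of degree $14$ sits \emph{between} the curve and the five-dimensional scroll, $C\subset S\subset\PPE\subset\PP^8$. Since $S\subset\PPE$ is Gorenstein of codimension $3$, Buchsbaum--Eisenbud together with a first Chern class computation force its resolution to be the Pfaffian complex of Lemma \ref{shape_res_K3}, whose relative linear strand contains $\OO_\PPE(-2H+R)^{\oplus 4}\leftarrow\OO_\PPE(-3H+2R)$; this strand is a subcomplex of the relative canonical resolution of $C\subset\PPE$, and that inclusion is what produces the $\OO_\PPE(-3H+2R)$ summand in $N_2$. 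Self-duality then yields the symmetric splitting $(2^a,1^{16-2a},0^a)$ you describe, but the symmetry is the easy half; without an argument locating the intermediate surface and its forced Pfaffian strand, your proposal shows that the lattice $\hh$ \emph{permits} the imbalance but not that it \emph{forces} it, as you acknowledge.
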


The above theorem follows from a new geometric connection  between the universal Brill--Noether variety $\cW^1_{9,6}$ and a moduli space $\cF^\hh$ parametrizing lattice polarized $K3$ surfaces for a certain rank 3 lattice $\hh$. 

We will explain this connection and our main results. 
In our experiments the generators of $C\subset \PPE$ have linear syzygies which force the second syzygy bundle in the RCR to be unbalanced. Furthermore, there exists a linear syzygy such that the associated syzygy scheme defines a $K3$ surface $S$ of genus $8$. The curve $C$ is contained in $S$ which in turn is contained in the scroll $\PPE$.
The ruling on $\PPE$ cuts out a pencil of elliptic curves $N$ of degree $5$ on $S$ and the ruling restricted to $C$ gives back the pencil $|L|$. If we denote by $H$ the hyperplane section of $S$, then the intersection matrix defined by the classes $\big\{\OsH,\OsC,\OsN\big\}$ has the form
$$ 
\begin{pmatrix}14 & 16 & 5 \\ 16 & 16 & 6 \\ 5 & 6 & 0 \end{pmatrix}
.
$$
In order to state the main theorem we need some notation. Let $\hh$ be the rank $3$ lattice defined by the above intersection matrix with respect to an ordered basis $\{h_1,h_2,h_3\}$. The moduli space $\cF^\hh$ of lattice polarized $K3$ surfaces parametrizes pairs $(S,\varphi)$ consisting of a $K3$ surface and a primitive lattice embedding $\varphi: \hh \to \Pic(S)$ such that  $\varphi(\hh)$ contains an ample class. It is a quasi projective irreducible $17$-dimensional variety by \cite{Do96}. If $(S,\varphi)\in \cF^\hh$ is an $\hh$-polarized $K3$ surface, then we denote 
$$
\OO_S(H)=\varphi(h_1),\ \OO_S(C)=\varphi(h_2)\ \text{and } \varphi(h_3)=\OO_S(N).
$$
We consider the open subset
$$
\cF^\hh_8=\left\{ (S,\varphi)\ \big| \ S\in \cF^\hh \text{ and } \OO_S(H) \text{ ample } \right\}
$$
 of the moduli space $\cF^\hh$ and the open subset 
$$
\cP^\hh_8=\left\{(S,\varphi,C)\ \big |\ (S,\varphi)\in \cF_8^\hh \text{ and } C\in |\OsC| \text{ smooth} \right\}
$$
of the tautological $\PP^9$-bundle over $\cF^\hh_8$.
Then, the natural restriction map	
$$
\phi:\cP^\hh_8 \to \cW^1_{9,6},\ \big(S,\varphi,C\big) \mapsto \big(C,\OsN\otimes \OO_C\big)
$$
connects $\cP^\hh_8$ with the universal Brill--Noether variety
$
\cW^1_{9,6}.
$
Note that $\dim \cP^\hh_8= \dim \cW^1_{9,6}+1=26$. Our main theorem is the following.
\begin{mtheorem}[see Theorem \ref{mainthm}]
	The map $\phi:\cP^\hh_8 \to \cW^1_{9,6}$ defined above
	is dominant. 
	Moreover, $\cP^\hh_8$ is birational to a $\PP^1$-bundle over an open subset of $\cW^1_{9,6}$. 
\end{mtheorem}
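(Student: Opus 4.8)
The plan is to exhibit an explicit rational inverse to $\phi$ and to read off the fibre structure directly from the relative canonical resolution. Given a general $(C,L)\in\cW^1_{9,6}$, the scroll $X=\PPE$ is determined by $(C,L)$, and by Proposition~\ref{existenceExample} the second syzygy bundle $N_2$ in the RCR of $C\subset\PPE$ is unbalanced. First I would isolate the piece of $N_2$ responsible for the unbalancedness: the jump in the splitting type produces a distinguished $2$-dimensional space $V$ of linear syzygies among the generators of the ideal of $C\subset\PPE$. For each $[v]\in\PP(V)\cong\PP^1$ the associated syzygy scheme $\mathrm{Syz}(v)$ cuts out a surface $S_v$ with $C\subset S_v\subset\PPE$; the whole point is that $S_v$ is a genus-$8$ $K3$ surface and that $[v]\mapsto S_v$ sweeps out the fibre of $\phi$ over $(C,L)$. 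Since all the conditions involved (unbalancedness, smoothness of $S_v$, the shape of its Picard lattice) are open, it suffices to verify them on the single computer-algebra example underlying Proposition~\ref{existenceExample} and then propagate by semicontinuity to the general pair.

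For the dominance statement I would argue as follows. Working with the explicit example $(S_0,\varphi_0,C_0)$, I first check that $\phi(S_0,\varphi_0,C_0)$ really lies in $\cW^1_{9,6}$, i.e.\ that $\OO_{S_0}(N)|_{C_0}$ is a base-point-free $g^1_6$; this is immediate from the intersection numbers $\OsC\cdot\OsN=6$ and $\OsN^2=0$ together with the fact that the ruling of $\PPE$ restricted to $C_0$ is the pencil $|L_0|$. I would then compute the differential $d\phi$ at this point via deformation theory, comparing the tangent space to $\cP^\hh_8$ (deformations of the flag $C\subset S\subset\PP^8$ together with the polarization) with the tangent space to $\cW^1_{9,6}$ (deformations of the pair $(C,L)$), and show that $d\phi$ is surjective with one-dimensional kernel. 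Because both $\cP^\hh_8$ and $\cW^1_{9,6}$ are irreducible --- the former as an open subset of a $\PP^9$-bundle over the irreducible $\cF^\hh$, the latter by Brill--Noether theory since $\rho(9,1,6)=1$ --- surjectivity of $d\phi$ at one point forces $\phi$ to be dominant, and since $\dim\cP^\hh_8=\dim\cW^1_{9,6}+1=26$ the general fibre is smooth of dimension $1$.

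To upgrade ``one-dimensional fibres'' to ``$\PP^1$-bundle'', I would globalize the syzygy construction. Over a suitable open subset $U\subseteq\cW^1_{9,6}$ the scrolls $\PPE$ and their relative canonical resolutions fit into a family, and the distinguished spaces $V$ of linear syzygies organize into a rank-$2$ vector bundle $\mathcal{V}$ on $U$; the fibrewise assignment $[v]\mapsto(S_v,\varphi_v,C)$ then defines a morphism from the $\PP^1$-bundle $\PP(\mathcal{V})$ to $\cP^\hh_8$ over $U$. Matching this with the fibre description from the previous paragraph shows it is birational, which is exactly the asserted $\PP^1$-bundle structure.

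The main obstacle is the middle step: proving that the entire pencil $\PP(V)$ --- and not merely one distinguished syzygy --- yields smooth $K3$ surfaces carrying precisely the rank-$3$ lattice $\hh$ as a primitive sublattice of their Picard group containing an ample class, and that the resulting map $\PP(V)\to\phi^{-1}(C,L)$ is a bijection. Establishing that $\hh$ embeds primitively and that $\varphi(\hh)$ contains an ample class requires control of the Picard lattice of the general $S_v$ (for which one must rule out unexpected $(-2)$-classes, using e.g.\ the $(-2)$-class $\OsC-\OsH$ and the relation $H|_C=K_C$), while injectivity and surjectivity of $\PP(V)\to\phi^{-1}(C,L)$ amount to showing that distinct syzygies give non-isomorphic polarized $K3$'s and that every $K3$ in the fibre arises this way. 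I expect the one explicit example to anchor these open conditions, so that the genuine work is the deformation-theoretic surjectivity of $d\phi$ and the verification that the syzygy pencil exhausts the fibre.
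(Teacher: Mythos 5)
Your plan has a genuine logical gap at its very first step, and it is precisely the difficulty this paper was written to circumvent. You assert that for a \emph{general} $(C,L)\in\cW^1_{9,6}$ the second syzygy bundle $N_2$ is unbalanced ``by Proposition~\ref{existenceExample}'' and that unbalancedness is among the ``open conditions'' that ``propagate by semicontinuity to the general pair.'' This is backwards: semicontinuity of $h^1(\PP^1,\cEnd(N_2))$ makes \emph{balancedness} the open condition, so a single unbalanced example tells you nothing about the general pair (the introduction states this explicitly: ``the usual semi-continuity argument does not show the generic unbalancedness''). Consequently the distinguished pencil $\PP(V)$ of linear syzygies, on which your entire construction of the fibre and of the rational inverse rests, is not known to exist over a general point of $\cW^1_{9,6}$ at the outset. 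The paper's logic runs in the opposite direction: it first proves dominance of $\phi$ \emph{without} assuming anything about the general RCR, and only then deduces (Corollaries~\ref{unirationalityTheorem} and \ref{allUnbalanced}) that the general, in fact every, pair has unbalanced $N_2$ and hence carries the syzygy pencil.

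The ingredients you are missing are exactly the ones that make the fibre of $\phi$ controllable from the $\cP^\hh_8$ side rather than the $\cW^1_{9,6}$ side: Lemma~\ref{K3_on_scroll} (any $K3$ in the fibre lies in the scroll $\PPE$), Lemma~\ref{shape_res_K3} (Buchsbaum--Eisenbud forces $S\subset\PPE$ to be cut out by the Pfaffians of a $5\times 5$ skew matrix whose linear strand is a subcomplex of the RCR of $C$, which is what \emph{forces} unbalancedness, Corollary~\ref{K3_forces_unbalancedness}), and Lemma~\ref{bound_fiber_dimension} (the linear syzygy determines $S$ uniquely, via Schreyer's reconstruction of the skew matrix from four Pfaffians and their syzygy). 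The last of these bounds the fibre dimension by $a-1\le 1$, and then dominance follows from Chevalley semicontinuity, the count $\dim\cP^\hh_8=26=\dim\cW^1_{9,6}+1$, and irreducibility of $\cW^1_{9,6}$ --- no computation of $d\phi$ is needed. Your proposed substitute, a deformation-theoretic proof that $d\phi$ is surjective at the explicit example, is left entirely as a black box and is substantially harder than the fibre-dimension argument; likewise your acknowledged ``main obstacle'' (that the syzygy pencil exhausts the fibre and that distinct syzygies give distinct polarized $K3$'s) is not an open-condition bookkeeping issue but is the actual content of Lemmas~\ref{K3_on_scroll}--\ref{bound_fiber_dimension}. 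As written, the proposal does not yield a proof.
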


For $K3$ surfaces the best studied moduli spaces are those parametrizing polarized $K3$ surfaces. In a series of papers (\cite{Muk88}, \cite{Mukg11}, \cite{Mukg13}, \cite{Mukg16}, \cite{Mukg1820}) Mukai showed that the moduli space $\cF^H_g$ parametrizing $H$-polarized $K3$ surfaces, where $H$ is an ample class with $H^2=2g-2$, is unirational for  $g\le 13$ and $g=16,18,20$. In \cite{Mukg17}, he also announced the case $g=17$. The unirationality was recently shown for  $g=14$ and $g=22$ in \cite{Nu16, FV18} and \cite{FV20}, respectively. 

On the other hand Gritsenko--Hulek--Sankaran \cite{GHS} showed that $\cF^H_g$ is of general type for $g=47,51,55,58,61$ and $g>62$ (see also \cite{Kon93} and \cite{Kon99}).

For $\nn$-polarized $K3$ surfaces where $\nn$ is a lattice of higher rank much less is known. 
For certain higher rank lattices $\nn$ the (uni)-rationality of $\cF^\nn$ was proved by Bhargava--Ho--Kumar in \cite{BHK16}, by Ma in \cite{Ma15, Ma19} and by the second author and Knutsen in \cite{HK20}. For the Nikulin lattice $\mathfrak{N}$ Farkas--Verra and Verra showed the unirationality of $\cF^\mathfrak{N}_g$ for $g\leq 8$ (see \cite{FV12, FV16} and \cite{V15}). Here $g$ refers to the self intersection $H^2=2g-2$ of the ample class $H$ in $\mathfrak{N}$. The (uni)-rationality of moduli spaces of ``non-standard'' Nikulin surfaces are studied in \cite{KLCV}. The (uni)-rationality of the moduli space of elliptic $K3$ surfaces was shown in \cite{Mir, Lej} and the moduli spaces of certain elliptic $K3$ surfaces of Picard rank $3$ were recently studied in \cite{FM20, FHM20}. 

Using the unirationality of  $\cW^1_{9,6}$, which is classically known due to \cite{AC81}, we obtain the following corollary from our main theorem.

\begin{mcorollary}[see Corollary \ref{unirationalityTheorem}]
 The moduli spaces $\cP_8^\hh$ and $\cF^\hh$ are unirational.
\end{mcorollary}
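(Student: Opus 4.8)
The plan is to deduce both assertions formally from Theorem~\ref{mainthm} together with the classical unirationality of $\cW^1_{9,6}$ due to \cite{AC81}. Throughout I will use the standard facts that unirationality is a birational invariant, that it is inherited by dense open subsets and by images under dominant rational maps, and that $X\times\PP^n$ is unirational whenever $X$ is.

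First I would treat $\cP^\hh_8$. By \cite{AC81} the variety $\cW^1_{9,6}$ is unirational, hence so is every dense open subset $U\subseteq\cW^1_{9,6}$. By Theorem~\ref{mainthm} the variety $\cP^\hh_8$ is birational to a $\PP^1$-bundle over such a $U$. Since the projectivization of a rank-$2$ vector bundle is Zariski-locally trivial, this $\PP^1$-bundle is birational to $U\times\PP^1$, which is unirational because $U$ is. Therefore $\cP^\hh_8$ is unirational.

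Second I would pass from $\cP^\hh_8$ to $\cF^\hh$ via the tautological projection. By construction $\cP^\hh_8$ is a dense open subset of the tautological $\PP^9$-bundle over $\cF^\hh_8$, so the forgetful map $\cP^\hh_8\to\cF^\hh_8$, $(S,\varphi,C)\mapsto(S,\varphi)$, is dominant (for a general pair $(S,\varphi)$ a general $C\in|\OsC|$ is smooth by Bertini). As the image of a unirational variety under a dominant rational map is unirational, $\cF^\hh_8$ is unirational; and since $\cF^\hh_8$ is dense open in the irreducible variety $\cF^\hh$, the latter is unirational as well.

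The argument is essentially formal once Theorem~\ref{mainthm} is in hand. The only point that requires a little care --- and which I expect to be the main (minor) obstacle --- is the passage from the abstract $\PP^1$-bundle furnished by Theorem~\ref{mainthm} to the product $U\times\PP^1$: one must confirm that this bundle is the projectivization of a vector bundle (equivalently, that it admits a rational section), so that it is Zariski-locally trivial. If it were only a Severi--Brauer fibration over the higher-dimensional base $U$ one could not invoke Tsen's theorem directly, and a separate argument exhibiting a rational section would be required.
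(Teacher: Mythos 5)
Your formal deduction is sound and follows the same overall route as the paper, but be aware of how the labour is divided there: in the body of the paper, Theorem \ref{mainthm} asserts only that $\phi$ is dominant, and the $\PP^1$-bundle structure --- which you import wholesale from the introduction's statement of the main theorem --- is in fact established inside the proof of Corollary \ref{unirationalityTheorem} itself. So the single point you flag as ``requiring a little care'' is precisely the substantive content of the paper's argument, and it is resolved by an explicit parametrization of the fibers rather than by an abstract triviality statement: combining Proposition \ref{existenceExample}, Lemma \ref{bound_fiber_dimension} and the dominance of $\phi$, one shows that over a dense open subset of $\cW^1_{9,6}$ the $K3$ surfaces in the fiber $\phi^{-1}(C,L)$ correspond (after saturating with the vertex of the scroll) to the syzygy schemes of generalized columns $s$ of the rank-two free summand $\OO_\PPE(-3H+2R)^{\oplus 2}$ of linear syzygies in the relative canonical resolution of $C\subset\PPE$. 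Hence $\cP^\hh_8$ is birational to $\widetilde{\cW^1_{9,6}}=\{(C,L,s)\}$, which is the projectivization of a rank-$2$ vector bundle over an open subset of $\cW^1_{9,6}$ and therefore Zariski-locally trivial; your worry about a nontrivial Severi--Brauer fibration does not arise. With that supplied, your two remaining reductions --- unirationality of $U\times\PP^1$ from the classical unirationality of $\cW^1_{9,6}$, and unirationality of $\cF^\hh$ via the dominant forgetful map $\cP^\hh_8\to\cF^\hh_8$ onto a dense open subset of the irreducible variety $\cF^\hh$ --- are exactly as in the paper, where the second step is left implicit in the phrase ``in particular $\cP_8^\hh$, $\cF_8^\hh$ and hence $\cF^\hh$ are unirational.''
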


\begin{remark}
 Computer experiments indicate that a similar behaviour occurs for pairs $(C,L)\in \cW_{g,g-3}^1$. In these cases the corresponding syzygy scheme yields a $(g-7)$-dimensional Fano variety whose general linear section with a $\PP^7$ is a smooth canonically embedded curve of genus $8$. 
\end{remark}

Mukai's work \cite{Muk88} shows that the moduli space of genus $9$ curves $\cM_9$ is dominated by a projective bundle over the moduli space of polarized $K3$ surfaces $\cF^H_9$. He describes (Brill--Noether general) $K3$ surfaces containing a general curve $C\in \cM_9$. 
In contrast, we will show the existence of a unique $K3$ surface of Picard rank $4$ containing $C$ for a general point $(C,\omega_C\otimes L^{-1})\in \cW^3_{9,10}$. Our result is similar to the main results of \cite{AK11, Mukg11}. The idea is the following: Let $(C,L)\in \cW^1_{9,6}$ be a general point. Then the image $C'$ under the residual embedding $\omega_C\otimes L^{-1}$ lies on a net of quartics. We will show that the fiber $\phi^{-1}(C,L)$ of the map $\phi:\cP^\hh_8 \to \cW^1_{9,6}$ defines a plane cubic inside this net of quartics. By studying the geometry of the quartic corresponding to the singular point, it follows that its Picard lattice with respect to an ordered basis 
 $\{h_1',h_2',h_3',h_4'\}$ 
 has the form
$$
\hh'\sim 
\begin{pmatrix}
4 & 10 & 1 & 1 \\
10 & 16 & 0 & 0 \\
1 & 0 & -2 & 0 \\
1 & 0 & 0 & -2
\end{pmatrix} .
$$
This yields the following theorem. 
\begin{mtheorem}[see Theorem \ref{birationalityTheorem}] Let
$$
\cP^{\hh'}_3=\left\{(S,\varphi,C)\ \big |\ (S,\varphi)\in \cF^{\hh'}, \OO_S(H')=\varphi(h_1') \text{ ample and } C\in |\varphi(h_2')| \text{ smooth} \right\}
$$
be the open subset of the tautological $\PP^9$-bundle over the moduli space $\cF^{\hh'}_3$. Then
	the morphism 
	$$
	\phi': \cP^{\hh'}_3 \to \cW^3_{9,10},\ (S,\varphi,C) \mapsto (C, \OO_S(H')\otimes \OO_C)
	$$
	defines a birational equivalence.
\end{mtheorem}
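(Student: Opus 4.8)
The plan is to prove that $\phi'$ is a birational equivalence by showing it is a dominant, generically injective morphism between irreducible varieties of the same dimension; over $\mathbb{C}$ these two properties suffice. First I would record the dimension bookkeeping. Since $\cF^{\hh'}$ parametrizes $K3$ surfaces polarized by a rank $4$ lattice, $\dim \cF^{\hh'} = 20-4 = 16$ by \cite{Do96}; as $(h_2')^2 = 16$ the linear system $|\varphi(h_2')|$ is a $\PP^9$, so the tautological bundle gives $\dim \cP^{\hh'}_3 = 25$. On the target side, Serre duality identifies $\cW^1_{9,6}$ with $\cW^3_{9,10}$ through the residuation $(C,L)\mapsto (C,\omega_C\otimes L^{-1})$, whence $\dim \cW^3_{9,10} = \dim \cW^1_{9,6} = 25$. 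Thus it is enough to exhibit, for a general point of $\cW^3_{9,10}$, a fiber of $\phi'$ consisting of a single reduced point.

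Next I would transfer the problem to the net of quartics and construct the rational inverse. Fix a general $(C,D)\in\cW^3_{9,10}$ coming from $(C,L)\in\cW^1_{9,6}$ and embed $C\hookrightarrow\PP^3$ via $|D|=|\omega_C\otimes L^{-1}|$ as a curve $C'$ of degree $10$. Riemann--Roch together with the generality of $C'$ gives $h^0(\PP^3,\cI_{C'}(4)) = 35-32 = 3$, so the quartics through $C'$ form a net $\cN\cong\PP^2$. I would then identify the fiber $(\phi')^{-1}(C,D)$ with the set of quartic surfaces $S'\subset\PP^3$ containing $C'$ whose minimal resolution is a $K3$ surface with Picard lattice $\hh'$ and with $\OO_{S'}(H')|_C = D$, so that locating these points amounts to locating the right quartics inside $\cN$. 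The key geometric input, furnished by Theorem \ref{mainthm}, is that the fiber $\phi^{-1}(C,L)\cong\PP^1$ parametrizing the rank $3$ genus $8$ $K3$ surfaces through $C$ maps to $\cN$ with image a plane cubic $\Gamma$.

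The heart of the argument is to single out which quartic in $\cN$ carries the lattice $\hh'$ and to prove it is unique. Here I would show that $\Gamma$ is an irreducible rational cubic with a single node, that $\PP^1\to\Gamma$ is its normalization, and that the quartic $Q_0$ corresponding to the node has at worst rational double points, so that its minimal resolution $S'$ is a $K3$ surface. A direct analysis of the geometry of $Q_0$ -- the two branches of the node giving rise to the two orthogonal $(-2)$-classes $h_3',h_4'$, each meeting $H'$ once and orthogonal to $C$ -- then identifies $\Pic(S')$ with $\hh'$. Since $\Gamma$ has a node, $Q_0$ exists, so the general target point lies in the image; since $\Gamma$ has \emph{only one} singular point, $Q_0$, and hence $S'$, is unique, so $(\phi')^{-1}(C,D)$ is a single point. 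This simultaneously yields dominance (image of dimension $25$, hence dense) and generic injectivity, and thus the rational inverse $(C,D)\mapsto(S',\varphi',C)$.

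I expect the main obstacle to be precisely this last step: controlling the singularities of the special quartic $Q_0$ well enough to see that it resolves to a $K3$ surface, computing its Picard lattice to be exactly $\hh'$, and verifying that $\Gamma$ carries a single node so that the rank $4$ $K3$ through $C'$ is genuinely unique. A related subtlety I would address is the symmetry $h_3'\leftrightarrow h_4'$ of $\hh'$: one must check that the marking $\varphi'$ is canonically pinned down by the geometry of $Q_0$, so that the two orderings of the exceptional classes do not produce two distinct points of $\cP^{\hh'}_3$ over $(C,D)$ and $\phi'$ is generically injective rather than merely generically two-to-one. Once these points are settled, dominance and generic injectivity of a morphism of irreducible $25$-dimensional varieties combine to give the asserted birational equivalence.
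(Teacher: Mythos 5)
Your strategy coincides with the paper's: the dimension count $\dim\cP^{\hh'}_3=(20-4)+9=25=\dim\cW^3_{9,10}$, the identification of the fiber of $\phi$ over $(C,L)$ with a plane cubic $\Gamma$ inside the net $|\cI_{C'}(4)|\cong\PP^2$, and the observation that points of $(\phi')^{-1}(C,\omega_C\otimes L^{-1})$ correspond to singular points of $\Gamma$, with dominance and birationality then following from irreducibility and semicontinuity. The steps you flag as ``the main obstacle'' are exactly what the paper supplies, and it does so differently from what you sketch: the degree of $\Gamma$, the uniqueness of its singular point, and the smoothness of the quartic model of $S_p$ are established in Proposition \ref{existenceExample2} by an explicit \emph{Macaulay2} computation in one example, and the generic statement is obtained by showing that the one-parameter family of $5\times5$ skew-symmetric Pfaffian matrices with indeterminate coefficients always induces a rational cubic. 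Likewise the Picard lattice is not read off from ``the two branches of the node on the quartic'' but abstractly: a singular point of $\Gamma$ means two non-isomorphic rank-$3$ polarized surfaces $(S,\OO_S(H_1))$, $(S,\OO_S(H_2))$ with the same quartic model, i.e.\ $|\OO_S(H_1-N_1)|=|\OO_S(H_2-N_2)|$, and the intersection numbers $H_1.H_2=16$, $H_2.N_1=6$ are forced by the fact that $C-H_i$ are $(-2)$-classes; the base change to $\{H_i-N_i,\,C,\,C-H_1,\,C-H_2\}$ then yields $\hh'$. Your concern about the symmetry $h_3'\leftrightarrow h_4'$ possibly making $\phi'$ two-to-one is legitimate and is the one point the paper treats only implicitly (via the assertion that $\OO_S(H')$ determines the remaining classes uniquely); since that automorphism of $\hh'$ fixes $h_1'$ and $h_2'$, an argument that the two markings give the same point of $\cF^{\hh'}$ (or a restriction to markings up to this involution) is indeed needed, so raising it is a genuine contribution rather than a defect of your plan.
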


In Section \ref{preliminaries} we recall the definition and basic results of relative canonical resolutions and lattice polarized $K3$ surfaces. 
Section \ref{unirationality} is devoted to the proof of our main theorem. 
In Section \ref{birationalityBNloci} we deduce the birational equivalence between $\cP^{\hh'}_3$ and $\cW^3_{9,10}$.

\paragraph*{Acknowledgement} 
We would like to thank Michael Kemeny, Andreas Leopold Knutsen, Kristian Ranestad and Frank-Olaf Schreyer for helpful and enlightening discussions and remarks. Andreas Leopold Knutsen and Frank-Olaf Schreyer also provided valuable feedback and correction on an early draft of the paper. 

\section{Preliminaries}\label{preliminaries}
In this section we briefly summarize the construction of relative canonical resolutions and recall some well-known facts about the moduli space of lattice polarized $K3$ surfaces.
\subsection{Relative canonical resolutions}\label{relCanRes}
Let $C\subset \PP^{g-1}$ be a canonically embedded curve of genus $g$ which admits a complete basepoint free $g^1_k$. Then 
$$
X=\bigcup_{D\in g^1_k}\overline{D_\lambda}\subset \PP^{g-1}
$$
is a rational normal scroll of dimension $\dim(X)=(k-1)$ and degree $\deg(X)=(g-k+1)$, where $\overline{D}$ denotes the span of the divisor $D$. 
The variety $X$ is the image of a projective bundle $\pi :\PPE\to \PP^1$ under the natural map
$$
j: \PP(\cE)\to \PP(H^0(\PPE,\OO_\PPE(1)))
$$
where  $\cE=\OO_{\PP^1}(e_1)\oplus\dots \oplus \OO_{\PP^1}(e_{k-1})$ with $\sum_{i=1}^{k-1} e_i=\deg(X)$.
The map $j$ above is an isomorphism if all $e_i\geq1$, otherwise it is a resolution of (rational) singularities, and we will consider $\PPE$ instead of $X$ most of the time.

Recall that the Picard group of $\PPE$ is generated by the hyperplane class $H$ and the ruling $R=[\pi^*\OO_{\PP^1}(1)]$ with intersection products
$
H^{k-1}=f,\ H^{k-2}R=1,\ R^2=0.
$
By \cite{Sch} one can resolve the canonical curve $C\subset \PPE$ in terms of $\OO_\PPE$-modules.
\begin{theorem}[\cite{Sch}, Corollary 4.4]\label{Sch4.4}
Let $C$ be a curve with a complete base point free $g^1_k$ and 
let $\PPE$ be the projective bundle associated to the scroll $X$, swept out by the $g^1_k$.
\begin{enumerate}
\item [(a)]
$C\subset \PPE$ has a resolution $F_\bullet$ of type 
$$
0 \to \pi^*N_{k-2}(-kH) \to \pi^*N_{k-3}\big((-k+2)H\big) \to \dots \to \pi^*N_1(-2H) \to \OO_\PPE \to \OO_C \to 0
$$
with $N_i=\sum_{j=1}^{\beta_i}\OO_\PPE(a_j^{(i)})$ of rank $\beta_i=\frac{k}{i+1}(k-2-i)\binom{k-2}{i-1}$. 
\item [(b)] The complex $F_\bullet$ is self dual, that is, 
$\cHom(F_\bullet, \OO_\PPE(-kH+(g-k-1)R))\cong F_\bullet$
\end{enumerate}
\end{theorem}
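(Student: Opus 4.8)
My plan is to build the resolution $F_\bullet$ fiberwise over $\PP^1$, to glue the fiberwise data into a global complex of $\OO_\PPE$-modules, and finally to deduce the self-duality in (b) from relative Serre duality for $\pi\colon\PPE\to\PP^1$.

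First I would restrict the whole picture to a general fiber $\PP^{k-2}=\pi^{-1}(p)$. Since the ruling of the scroll cuts out the pencil, the scheme-theoretic intersection $D:=C\cap\PP^{k-2}$ is a divisor of the complete base point free $g^1_k$, hence a length-$k$ subscheme. Because the pencil is complete, geometric Riemann--Roch gives $\dim\overline D = k-1-1 = k-2$, so $D$ is nondegenerate in the fiber, and by the uniform position principle the $k$ points are in linearly general position. A configuration of $k=(k-2)+2$ points in linearly general position in $\PP^{k-2}$ is arithmetically Gorenstein with symmetric $h$-vector $(1,k-2,1)$ and Castelnuovo--Mumford regularity $2$. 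I would then write down its minimal free resolution over the homogeneous coordinate ring of the fiber: a self-dual complex whose first syzygy module is generated by the $\tfrac{k(k-3)}{2}=\beta_1$ quadrics through $D$, whose interior terms sit in the single internal degrees $\OO(-(i+1))^{\beta_i}$ for $1\le i\le k-3$, and whose last term is the Gorenstein socle $\OO(-k)$ (a line bundle, the term not captured by the interior formula). Reading off the Betti numbers from the Hilbert function $1,k-1,k,k,\dots$ yields the ranks $\beta_i$.

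The heart of (a) is the assembly, which I expect to be the main obstacle. The crucial structural point is that the fiberwise resolution is concentrated in a \emph{single} internal degree at each homological step (the pure strand $-(i+1)$, with the degree-$k$ jump at the end). This rigidity forces each relative syzygy sheaf of $\OO_C$ over $\OO_\PPE$ to be a twist of a pullback from the base, i.e.\ of the form $\pi^*N_i\otimes\OO_\PPE(-(i+1)H)$ (resp.\ $\pi^*N_{k-2}(-kH)$ for the socle term), where $N_i=\pi_*\big(\text{($i$-th syzygy sheaf}\otimes\OO_\PPE((i+1)H))\big)$. On the smooth curve $\PP^1$ these $N_i$ are torsion-free, hence automatically locally free of rank $\beta_i$, so no separate local-freeness argument is needed. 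What must be proved is that the resulting complex of $\OO_\PPE$-modules is globally exact and has exactly these terms: one checks exactness fiberwise (where it reduces to the fiberwise resolution above) and controls the twists and the absence of extra homology by the vanishing of the relevant higher direct images $R^{j}\pi_*$ in the chosen degree range. Establishing that the fiberwise purity propagates to a genuine relative resolution, rather than only to a fiberwise statement, is the real content.

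Finally, for (b) I would invoke relative Serre duality, using $\omega_{\PPE}=\OO_\PPE\big(-(k-1)H+(g-k-1)R\big)$, which follows from $\det\cE=\OO_{\PP^1}(g-k+1)$ and $\omega_{\PP^1}=\OO_{\PP^1}(-2)$. Since $C$ is canonically embedded, $\OO_\PPE(H)$ restricts to the canonical bundle, so $\omega_C=\OO_C(H)$; and since $C$ is Cohen--Macaulay of codimension $k-2$ in the smooth variety $\PPE$, the sheaves $\mathscr{E}\!xt^{\,j}(\OO_C,\OO_\PPE)$ vanish for $j\neq k-2$, while $\mathscr{E}\!xt^{\,k-2}(\OO_C,\OO_\PPE)=\omega_C\otimes\omega_\PPE^{-1}=\OO_C\big(kH-(g-k-1)R\big)$. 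Therefore $\cHom(F_\bullet,\OO_\PPE)$ is, up to shift, a minimal free resolution of $\OO_C\big(kH-(g-k-1)R\big)$; tensoring with $\OO_\PPE(-kH+(g-k-1)R)$ turns it into a minimal free resolution of $\OO_C$, and the uniqueness of minimal free resolutions gives the claimed isomorphism $\cHom\big(F_\bullet,\OO_\PPE(-kH+(g-k-1)R)\big)\cong F_\bullet$.
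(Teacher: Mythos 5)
This theorem is imported from \cite{Sch}, Corollary 4.4, and the paper supplies no proof of its own, so there is no internal argument to compare against; your sketch is in substance a reconstruction of Schreyer's original proof (fiberwise resolution of the divisors of the pencil as arithmetically Gorenstein length-$k$ schemes in $\PP^{k-2}$ with pure, self-dual Betti table, assembly into a relative resolution over $\Sym(\cE)$ on $\PP^1$, and duality from the codimension-$(k-2)$ Gorenstein property with $\omega_\PPE=\OO_\PPE(-(k-1)H+(g-k-1)R)$), and it is sound, including the correct observation that the rank formula covers $1\le i\le k-3$ while $N_{k-2}$ is the rank-one socle term. Two minor points: the linearly general position of a divisor $D$ of the pencil does not require the uniform position principle --- it follows directly from completeness and base point freeness via geometric Riemann--Roch, since $h^0(D')=1$ for every proper subdivisor $D'\le D$ (otherwise $D-D'$ would lie in the base locus of $|D|$), so every subdivisor spans a linear space of maximal dimension; and the assembly step that you yourself flag as ``the real content'' is precisely where the work in \cite{Sch} lies, so a complete write-up would still have to carry out the purity-propagation and exactness argument that you only indicate.
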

The resolution $F_\bullet$ is called the \emph{relative canonical resolution} of $C$ with respect to the $g^1_k$ on $C$.
By \cite{BH} the slopes of the $i^{\text{th}}$ syzygy bundle 
$
N_i
$
is known to be $\mu(N_i)=\frac{(g-k-1)(i+1)}{k}$, but the generic splitting type of $N_i$ is only known in very few cases. 

We call a bundle $N=\OO_{\PP^1}(a_1)\oplus\dots \oplus \OO_{\PP^1}(a_d)$ on $\PP^1$ \emph{balanced} if $\max_{i,j}|a_i-a_j|\leq 1$ (or equivalently $h^1(\PP,\cEnd(\cE))=0$). For example, the bundle $\cE$ defining a scroll swept out by a pencil on a Petri-general canonical curve $C$ is always balanced by \cite[(2.5)]{Sch}. We summarize what is known about the bundles in the relative canonical resolution to be balanced.

The relative canonical resolution $F_\bullet$ is generically balanced if $k\leq 5$ (see \cite{B} and \cite{DP}) or if $g=nk+1$ for some $n>1$ (see \cite{BP}). Furthermore the first bundle $N_1$ is known to be generically  balanced for $g\geq (k-1)(k-3)$ (see \cite{BP}) or if the Brill--Noether number $\rho=\rho(g,k,1)$ is non-negative and $(k-\rho-\frac{7}{2})^2-2k+\frac{23}{4}\leq 0 $ (see \cite{BH}).

In the next section we will show that the second syzygy module of the relative canonical resolution of a general point $(C,L)\in \cW^1_{9,6}$ is unbalanced and corresponds to the existence of $K3$ surfaces $S\subset \PPE$ of Picard rank $3$ containing the curve $C$.
In this case the bundle $\cE$ is generically of the form $\cE=\OO_{\PP^1}(1)\oplus \OO_{\PP^1}(1)\oplus \OO_{\PP^1}(1)\oplus \OO_{\PP^1}(1)\oplus \OO_{\PP^1}$. Hence the corresponding scroll $X$ has a zero-dimensional vertex. But since the curve generically will not pass through the vertex, we will consider $\PPE$ instead of $X$.

\subsection{Lattice polarized K3 surfaces} \label{section_lattice_pol_K3}
We recall the definition of the moduli space of lattice polarized $K3$ surfaces due to \cite{Do96}. 

For an even non-degenerate lattice  $\mathfrak{n}$ of signature $(1,r)$, an $\mathfrak{n}$-polarized $K3$ surface is a pair $(S,\varphi)$, where $S$ is a $K3$ surface and $\varphi:\mathfrak{n} \to \text{Pic}(S)$ is a primitive lattice embedding such that $\varphi(\mathfrak{n})$ intersects the ample cone $\text{Amp}(S)$. 
Two $\mathfrak{n}$-polarized $K3$ surfaces $(S,\varphi)$ and $(S',\varphi')$ are called isomorphic if there exists an isomorphism $\alpha: S\to S'$, such that 
$\varphi=\alpha^*\circ \varphi'$. 

In \cite{Do96} Dolgachev shows that the
moduli space $\cF^\mathfrak{n}$ parametrizing isomorphism classes of $\mathfrak{n}$-polarized $K3$ surfaces exists as an equidimensional quasi-projective variety of dimension $19-r$. $\cF^\mathfrak{n}$ has at most two components which are, by complex conjugation, interchanged on the period domain.

For the rest of this article we will denote by $\hh$ the rank $3$ intersection lattice with respect to the ordered basis $\{h_1,h_2,h_3 \}$
$$
\hh=\begin{pmatrix}14 & 16 & 5 \\ 16 & 16 & 6 \\ 5 & 6 & 0 \end{pmatrix}
$$
and consider the moduli space $\cF^\hh$.
If $(S,\varphi)\in \cF^\hh$ then we denote
$$
\OO_S(H)=\varphi(h_1), \ \OO_S(C)=\varphi(h_2) \text{ and } \OO_S(N)=\varphi(h_3).
$$
By abuse of notation we will also say that $\{\OsH,\OsC,\OsN\}$ forms a basis of $\hh$.

After suitable sign changes and Picard-Lefschetz reflections we may assume that $\OsH$ is big and nef (see \cite[VIII, Prop 3.10]{BHPV}). 
To check the ampleness of a class, it is sufficient to compute the intersection with all smooth rational curves, that is, curves with self-intersection $-2$ (see \cite[Ch. 2 Prop. 1.4]{Huy}).
A \emph{Maple} computation (see \cite{BH2}) shows that there are in fact many smooth rational curves on $S$ and  if $(S,\varphi)\in \cF^\hh$ such that $\varphi(\hh)=\Pic(S)$, then $\OsH$ intersects all of them positively. Hence $\OsH$ is ample.

We summarize several properties of the other relevant classes
All the statements in the following remark follow from classical results in \cite{SD} (see also \cite[Ch. 2]{Huy}) and lattice computations which are done in \cite{BH2}. 

\begin{remark}\label{remark_nef_bpf}
	We may assume that all basis elements of the lattice $\hh$ are effective. For a $K3$ surface $S\in \cF^\hh$ with $\Pic(S)=\hh$, such that $\OsH$ is ample, one can check that 
	\begin{itemize}
		\item $\OsH$ and $\OO_S(H-N)$ are ample, base point free and the generic elements in the linear systems are smooth.
		\item $\OsC$ is big and nef, base point free and the generic element in the linear system is smooth.
		\item $\OsN$ is nef and base point free and can  be represented by a smooth and irreducible elliptic curve.
	\end{itemize}
Although the assumption $\Pic(S)=\hh$ is only satisfied for very general $K3$ surfaces in $\cF^\hh$,  all conditions above are open in the moduli space.
We remark furthermore that for the lattice $\hh$ it can be checked that the ample class $\OO_S(H)$ determines the classes $\OO_S(C)$ and $\OO_S(N)$ (with desired intersection numbers) uniquely.
\end{remark}

As in \cite{Beau}, we denote $\cF^\hh_8$ the moduli space
$$
\cF^\hh_8=\big\{(S,\varphi)\ \big | \ (S,\varphi)\in \cF^\hh \ \text{and } \OsH \ \text{ample} \big\}.
$$
By the remark above $\cF^\hh_8$ is a Zariski open subset of $\cF^\hh$. 
In particular, $\cF^\hh_8$ is again a quasi-projective irreducible variety of dimension $17$. Moreover, $\cF^\hh_8$ is irreducible by \cite[Prop 5.9]{Do96} for this particular lattice.
In what follows, we will omit referring to the primitive lattice embedding $\varphi:\hh\to \Pic(S)$ for elements in $(S,\varphi)\in \cF^\hh_8$ most of the time.
Whenever we will consider the projective model $S\subset \PP^8$ of a $K3$ surface $S\in \cF^\hh_8$ we  identify $S$ with its image in $\PP(H^0(S,\OsH)^*)$.

Since for generic $S\in \cF^\hh_8$ the general element in the linear system $|\OsC|$ is a smooth curve of genus $9$, we may consider the open subset of the tautological $\PP^{9}$-bundle over the moduli space $\cF^\hh_8$
$$
\cP^\hh_8=\big\{(S,C)\ |\ S\in \cF_8^\hh\ \text{and }C\in |\OsC|\ \text{smooth}\big \}.
$$
In the next section we prove that $\cP^\hh_8$ is a $\PP^1$-bundle over the universal Brill--Noether variety $\cW^1_{9,6}$.

\section{\texorpdfstring{The space $\cP^\hh_8$ as a $\PP^1$-bundle over $\cW^1_{9,6}$}{}}\label{unirationality}
In this section we prove the dominance of the morphism
$$
\phi:\cP^\hh_8\to \cW^1_{9,6},\ (S,C)\mapsto (C,\OsN\otimes \OO_C)
$$
and conclude that $\cP^\hh_8$ is a $\PP^1$-bundle over an open subset of $\cW^1_{9,6}$.  

Some of the statements rely on a computational verification using \emph{Macaulay2} \cite{M2}. The \emph{Macaulay2} script which verifies all these statements can be found in \cite{BH3}.
We start over by showing that there exist $K3$ surfaces with the desired properties.
\begin{proposition}\label{existenceExample}
\hspace{0.1cm}
\begin{enumerate}
\item[(a)]
There exists a smooth canonical genus $9$ curve $C$ together with a line bundle  $L\in W^1_6(C)$ such that the relative canonical resolution has the form 
\end{enumerate}
\begin{equation*}
\resizebox{\textwidth}{!}{
$\mscr{I}_{C/\PPE} \leftarrow 
\begin{matrix} \OO_\PPE(-2H+R)^{\oplus 6}\\ \oplus \\ \OO_\PPE(-2H)^{\oplus 3} \\ \\ \\ \end{matrix}
\leftarrow 
\begin{matrix}\OO_\PPE(-3H+2R)^{\oplus 2}\\ \oplus\\ \OO_\PPE(-3H+R)^{\oplus 12}\\ \oplus \\\OO_\PPE(-3H)^{\oplus 2}\end{matrix}
\leftarrow
\begin{matrix}\\ \\ \OO_\PPE(-4H+2R)^{\oplus 3}\\ \oplus \\ \OO_\PPE(-4H+R)^{\oplus 6}\end{matrix}
\leftarrow \OO_\PPE(-6H+2R)\leftarrow 0.$}
\end{equation*}
\begin{enumerate}
\item[(b)] 
There exists a syzygy  $s:\OO_\PPE(-3H+2R)\to \OO_\PPE(-2H+R)^{\oplus 6}$ whose syzygy scheme defines a $K3$ surface $S \in \cF^\hh_8$. In particular, the general elements in the linear series $|\OsH |,| \OsC |,| \OsN |$ are smooth, irreducible and Clifford general. 
\end{enumerate}
\end{proposition}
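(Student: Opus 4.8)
The plan is to establish both parts computationally, since the statement asserts the \emph{existence} of one example and Proposition \ref{existenceExample} already signals that the verification relies on \emph{Macaulay2} (see \cite{M2}, \cite{BH3}). For part (a), the strategy is to produce an explicit pair $(C,L)$ and compute its relative canonical resolution directly, then read off the splitting types. The cleanest way to obtain such a pair is to reverse the logic of the eventual main theorem: rather than starting with a curve and hunting for a scroll, I would first construct a lattice polarized $K3$ surface $S$ of Picard rank $3$ realizing the intersection lattice $\hh$, then cut out a smooth curve $C \in |\OsC|$ and set $L = \OsN \otimes \OO_C$. Concretely, one works over a finite field $\mathbbm{k} = \mathbb{F}_p$, uses the structure of the scroll $\PPE$ with $\cE = \OO_{\PP^1}(1)^{\oplus 4} \oplus \OO_{\PP^1}$, and either builds $S \subset \PP^8$ from random data forced to carry the required classes or constructs $C$ first and verifies it lies on an appropriate scroll. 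Once the ideal $\mscr{I}_{C/\PPE}$ is available as an $\OO_\PPE$-module, computing its minimal free resolution in \emph{Macaulay2} and matching the graded Betti numbers against the displayed complex is a finite mechanical check. The rank of the second syzygy bundle is $\beta_2 = 6+12+2 = 20$, consistent with the formula $\beta_i = \frac{k}{i+1}(k-2-i)\binom{k-2}{i-1}$ of Theorem \ref{Sch4.4} for $k=6$, $i=2$, which serves as a useful sanity check that the resolution has the correct shape before inspecting the twists.

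For part (b), the key observation driving the whole paper is that the two summands $\OO_\PPE(-3H+2R)^{\oplus 2}$ of highest twist in the second syzygy module are precisely what makes $N_2$ unbalanced, and that the \emph{linear} syzygies $s : \OO_\PPE(-3H+2R) \to \OO_\PPE(-2H+R)^{\oplus 6}$ carry geometric meaning. The plan is to fix one such syzygy $s$ and form its syzygy scheme $\mathrm{Syz}(s)$ in the sense of Green--von Bothmer, i.e.\ the scheme cut out by the linear forms appearing as entries of $s$ together with the relevant equations of $C \subset \PPE$. I would then verify in \emph{Macaulay2} that this scheme is a smooth surface $S$ with trivial canonical bundle and the expected Hodge numbers, hence a $K3$ surface, and that it contains $C$ and sits inside $\PPE$. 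The degree of $S \subset \PP^8$ should be $14$ so that the hyperplane class $H$ satisfies $H^2 = 14$, matching the $(1,1)$-entry of $\hh$; this is the genus-$8$ condition referenced in the statement. To place $S$ in $\cF^\hh_8$ one must exhibit the lattice embedding $\varphi : \hh \to \Pic(S)$: one verifies that $H$, $C$, and the restriction $N$ of the ruling $R$ generate a rank $3$ sublattice with Gram matrix exactly $\hh$, and that $H$ is ample by the $-2$-curve computation recalled in Remark \ref{remark_nef_bpf} (see \cite{BH2}). The final clause on smoothness, irreducibility and Clifford generality of the generic members of $|\OsH|$, $|\OsC|$, $|\OsN|$ follows from the nef/base-point-free analysis already summarized in that remark, combined with a direct check of the Clifford index of the generic $C$ (which for a general genus $9$ curve with a $g^1_6$ is as expected).

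The main obstacle, and the genuinely delicate point, is \textbf{bridging the gap from one verified example to an intrinsic geometric statement} — but strictly within the scope of this proposition, the obstacle is ensuring that the syzygy scheme of $s$ really is a honest \emph{smooth} $K3$ of the right type and not some degenerate or reducible scheme that merely has the correct Hilbert polynomial. Over a finite field, smoothness and the $K3$ conditions (namely $\omega_S \cong \OO_S$ and $h^1(\OO_S) = 0$) can be confirmed by computing the Jacobian ideal's codimension and the relevant cohomology, but one must be careful that the choice of $s$ among the two-dimensional space of highest-twist syzygies is generic enough to yield a reduced irreducible surface; I would address this by checking that the construction succeeds for the specific random seed recorded in \cite{BH3} and, via semicontinuity, infer that smoothness persists on a dense open set. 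The passage from the finite-field computation to characteristic zero is the standard lifting argument: smoothness, the prescribed Betti table, and the Picard lattice structure are all open conditions, so their verification modulo a suitable prime $p$ guarantees the existence of an example over $\mathbb{Q}$ (and hence over $\mathbb{C}$), which is all that part (b) requires.
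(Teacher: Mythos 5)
Your overall strategy --- establish existence by an explicit \emph{Macaulay2} computation over a finite field, extract the $K3$ surface as the syzygy scheme of a linear syzygy, and verify degree, lattice and ampleness as in Remark \ref{remark_nef_bpf} --- is essentially the route the paper takes: its proof is a pointer to the script \cite{BH3} together with the observation that a syzygy $s$ whose entries span $H^0(\PPE,\OO_\PPE(H-R))$ can be normalized to $(s_1,s_2,s_3,s_4,0,0)^t$, so that its syzygy scheme is cut out by four of the nine generators of $\cI_{C/\PPE}$, and that after saturating with the vertex of the scroll one obtains a degree $14$ surface realizing $\hh$. Two points in your write-up need correction. First, your primary proposed construction order is backwards and, as stated, circular: you would ``first construct a lattice polarized $K3$ surface $S$ of Picard rank $3$ realizing $\hh$,'' but producing such a surface is precisely what part (b) accomplishes (and what makes the unirationality of $\cF^\hh$ in Corollary \ref{unirationalityTheorem} nontrivial); you give no independent method for writing one down, and controlling a prescribed rank $3$ Picard lattice by ``random data'' is not a finite mechanical check. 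The workable route --- and the one the paper uses --- is your fallback: construct a random pair $(C,L)\in\cW^1_{9,6}$ directly, compute the resolution of $C\subset\PPE$, and then \emph{discover} the $K3$ as a syzygy scheme; note also that this scheme a priori contains the zero-dimensional vertex of the singular scroll $X$ (since $\cE=\OO_{\PP^1}(1)^{\oplus 4}\oplus\OO_{\PP^1}$), so one must saturate before identifying $S$, a step you omit. Second, your sanity check on the second syzygy module is miscomputed: the displayed module has rank $2+12+2=16$, not $6+12+2=20$, and Theorem \ref{Sch4.4} gives $\beta_2=\tfrac{6}{3}\cdot 2\cdot\binom{4}{1}=16$, so the numbers do agree --- but not with the value you wrote. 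Neither issue changes the nature of the argument, but the first means the proof as primarily proposed could not be carried out.
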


\begin{proof}
Using \emph{Macaulay2}, we have implemented the construction of such curves together with the relative canonical resolution in  \cite{BH3}.
In our example the relative canonical resolution is of the form as stated in (a).

 A syzygy $s: \OO_\PPE(-3H+2R)\to \OO_\PPE(-2H+R)^{\oplus 6}$ is a generalized column 
 of the $6\times 2$ submatrix of the relative canonical resolution of $C\subset \PPE$. 
 The entries of $s$ span the four-dimensional vector space $H^0(\PPE,\OO_\PPE(H-R))$. 
 Let $f_1,\dots, f_6$ be the generators of $\cI_{C/\PPE}$ corresponding to $\OO_\PPE(-2H+R)^{\oplus 6}$. By definition of $s$ we have $(f_1,\dots, f_6)\cdot s=0$. After a base change we may assume that $s$ is of the form 
 $$
 s=(s_1,s_2,s_3,s_4,0,0)^t.
 $$
 Applying this base change to $f_1,\dots, f_6$, we get new generators $f_1',\dots, f_6'$ such that 
 $$(f_1',\dots, f_6')\cdot(s_1,s_2,s_3,s_4,0,0)^t=0.$$ In this case the syzygy scheme associated to $s$ is given by $\cI_{syz(s)}=\langle f_1',\dots, f_4'\rangle$. For the general definition of a syzygy scheme see \cite{vB}. 
 
 Again by \cite{BH3}, the image of the syzygy scheme in the scroll $X$, swept out by $|L|$, is the union of its vertex and a $K3$ surface $S\subset X\subset \PP^8$. Hence, after saturating with the vertex, we obtain a $K3$ surface $S\subset \PP^8$ of degree $14$ such that the ruling on $X$ defines an elliptic curve $N$ on $S$ and the hyperplane section $H$ is a canonical curve of genus $8$. The intersection products of the classes $\{\OsH,\OsC,\OsN\}$ define the lattice $\hh$. 
\end{proof}

\begin{lemma}\label{K3_on_scroll}
Let $(S, C)\in \cP^\hh_8$ be general. 
Then $L=\OsN\otimes \OO_C$ defines a $g^1_6$ on $C$ such that $S$ is contained in the scroll  $X=\bigcup_{D\in |L|}\overline{D}$ swept out by $|L|$.
\end{lemma}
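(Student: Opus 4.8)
The plan is to verify the two assertions separately: that $L=\OsN\otimes\OO_C$ is a complete base point free pencil of degree $6$, and that the resulting scroll contains $S$. Throughout I work with a general $(S,C)\in\cP^\hh_8$, so that $\Pic(S)=\hh$ and all positivity statements of Remark~\ref{remark_nef_bpf} are available; I regard $S\subset\PP^8=\PP(H^0(S,\OsH)^*)$ and record first that $\OsH\otimes\OO_C\cong\omega_C$. Indeed, $C-H$ is a $(-2)$-class with $(C-H).H=C.H-H^2=2>0$, hence effective, and $(C-H).C=C^2-H.C=0$; since $\OsC$ is nef, each $(-2)$-curve occurring in $C-H$ meets the smooth irreducible curve $C$ in degree $0$ and is therefore disjoint from it, so $\OO_S(C-H)\otimes\OO_C\cong\OO_C$. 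Consequently the embedding of $C$ in $\PP^8$ cut out by $\OsH$ is its canonical embedding, and the scroll $X=\bigcup_{D\in|L|}\overline{D}$ is a well-defined subvariety of the ambient $\PP^8$ containing $S$.

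For the first assertion, $\deg L=C.N=6$ and $C$ has genus $9$. From the restriction sequence $0\to\OO_S(N-C)\to\OsN\to L\to 0$ and the inequality $(N-C).H=5-16=-11<0$ one gets $H^0(S,N-C)=0$, so restriction identifies $H^0(S,\OsN)$ with a subspace of $H^0(C,L)$. Since $\OsN$ is a base point free elliptic pencil ($N^2=0$ and $N$ primitive, by Remark~\ref{remark_nef_bpf}) we have $h^0(S,\OsN)=2$, whence $h^0(C,L)\ge 2$. For the reverse inequality I would invoke that a general such $C$ is Clifford general of genus $9$, so $\Cliff(C)=4$ (Proposition~\ref{existenceExample}(b)). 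Riemann--Roch on $C$ gives $h^1(C,L)=h^0(C,L)+2\ge 2$, so $L$ contributes to the Clifford index; therefore $6-2(h^0(C,L)-1)\ge 4$, i.e. $h^0(C,L)\le 2$. Thus $h^0(C,L)=2$ and $L$ is a $g^1_6$, base point free because it is cut out by the base point free pencil $|N|$.

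For the second assertion I would argue fibrewise. A general member $N_t\in|N|$ is a smooth irreducible elliptic curve of degree $H.N=5$ in $\PP^8$. Since $\OO_S(H-N)$ is ample (Remark~\ref{remark_nef_bpf}) we have $H^1(S,H-N)=0$, so the restriction $H^0(S,\OsH)\to H^0(N_t,\OsH\otimes\OO_{N_t})$ is surjective with image of dimension $9-h^0(S,H-N)=9-4=5$; hence $N_t$ is an elliptic normal quintic spanning $\overline{N_t}=\PP^4$. The divisor $D=N_t\cap C\in|L|$ is reduced of degree $C.N=6$ for general $t$. If $D$ were contained in a hyperplane of $\overline{N_t}$, its $6$ distinct points would all lie on a hyperplane section of $N_t$, which is a divisor of degree $5<6$ --- impossible. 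Therefore $\overline{D}=\overline{N_t}=\PP^4\supseteq N_t$, so $N_t\subseteq\overline{D}\subseteq X$. Letting $t$ vary, $\bigcup_t N_t$ is dense in $S$ while $X$ is closed, so $S\subseteq X$.

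The inputs feeding this argument (that $\OsN$ is a base point free elliptic pencil, that $\OO_S(H-N)$ is ample, and that $C-H$ is an effective $(-2)$-class orthogonal to $C$) are the lattice-theoretic facts recorded in Remark~\ref{remark_nef_bpf} and \cite{BH2}, valid on the locus $\Pic(S)=\hh$. I expect the only genuine subtlety to be the identification $\OsH\otimes\OO_C\cong\omega_C$, which is precisely what makes the ambient $\PP^8$ of $S$ agree with the canonical $\PP^8$ of $C$ in which the scroll of $|L|$ is defined; once this is in place, the remaining steps reduce to the short cohomological and degree computations above.
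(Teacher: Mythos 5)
Your proof is correct and its core --- the fibrewise argument that each elliptic quintic $N_t$ spans a $\PP^4$ which must coincide with the span of the degree-$6$ divisor $N_t\cap C$, followed by taking the closure of $\bigcup_t N_t$ --- is exactly the strategy of the paper. The differences are in the sub-arguments: where you show $\overline{N_t}\cong\PP^4$ via the vanishing $H^1(S,\OO_S(H-N))=0$ and the count $h^0(S,\OO_S(H-N))=4$, the paper instead assumes $\overline{N}\cong\PP^3$ and derives a $g^2_5$ on the genus-$8$ hyperplane section $H$, contradicting $W^2_5(H)=\emptyset$; both are fine, and your Bezout argument for $\overline{N_t\cap C}=\PP^4$ is the paper's verbatim (and, as you implicitly use, does not actually need $N_t\cap C$ to be reduced, only that the irreducible $N_t$ is not contained in a hyperplane of its span). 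You also supply two steps the paper leaves tacit: the identification $\OsH\otimes\OO_C\cong\omega_C$ via the effective $(-2)$-class $C-H$ orthogonal to $C$ (which is what makes the scroll of $|L|$ live in the same $\PP^8$ as $S$), and the completeness $h^0(C,L)=2$ via the Clifford-generality of $C$ from Proposition~\ref{existenceExample}(b) spread over the irreducible $\cP^\hh_8$. These additions are welcome and do not change the route; no gaps.
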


\begin{proof}
 Let $H\in |\OsH|$ be a general element and let $N\in |\OsN|$ be an elliptic curve of degree $5$. 
 Assume that the span $\overline{N}\cong \PP^3$ is three-dimensional. Then the intersection $N\cap H$ consists of $5$ points and the span $\overline{N\cap H}$ is a $\PP^2$. But this would give a $g^2_5$ by the geometric version of Riemann-Roch. Because of the genus formula we have $W^2_{5}(H)=\emptyset$, a contradiction. Thus, $N$ is an elliptic normal curve and $\overline{N}\cong \PP^4$. 
 
 Now since $S\subset \bigcup_{N\in |\OsN|}\overline{N}$  it remains to show that $\overline{N\cap C}\cong \PP^4$. 
 The intersection $N\cap C$ consists of $6$ points. Assume that these $6$ points only span a hyperplane $h\cong \PP^3\subset \PP^4$. Then $\deg(h\cap N)>\deg(N)$ which means, by B\'ezout, that $h\cap N$ is a component of $N$. Thus, the general $N$ is reducible, a contradiction by Remark \ref{remark_nef_bpf}. 
\end{proof}

\begin{lemma}\label{shape_res_K3}
	Let $(S, C)\in \cP^\hh_8$ be general
	 and $L=\OO_S(N)\otimes \OO_C$ 
	such that the relative canonical resolution of $C\subset \PPE$ has a balanced first syzygy bundle. If we further assume that $S\subset \PPE$, where $\PPE$ is the scroll associated to $L$, then $S\subset \PPE$ has a resolution of the form 
	$$
	0\leftarrow \OO_{S/\PPE} \leftarrow 
	\begin{matrix} \OO_\PPE(-2H+R)^{\oplus 4}\\ \oplus \\ \OO_\PPE(-2H)^{\oplus 1} \end{matrix}
\overset{\Psi}{	\longleftarrow }
	\begin{matrix}\OO_\PPE(-3H+2R)^{\oplus 1}\\ \oplus\\ \OO_\PPE(-3H+R)^{\oplus 4}\end{matrix}
	\leftarrow
	\OO_\PPE(-5H+2R)
	\leftarrow 0
	$$
	for a skewsymmetric matrix $\Psi$ and is generated by the $5$ Pfaffians of the matrix $\Psi$.
\end{lemma}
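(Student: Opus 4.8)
The plan is to show that $S\subset\PPE$ is arithmetically Gorenstein of codimension $3$, so that its resolution is forced into the Buchsbaum--Eisenbud Pfaffian shape, and then to pin down the twists of the terms by degree and self-duality bookkeeping. First I would observe that the curve $C\subset\PPE$ has the relative canonical resolution from Proposition \ref{existenceExample}(a), and that $S$ is obtained as the syzygy scheme of the distinguished syzygy $s:\OO_\PPE(-3H+2R)\to\OO_\PPE(-2H+R)^{\oplus 6}$ via Proposition \ref{existenceExample}(b). The generators of $\cI_{S/\PPE}$ are exactly the four entries $f_1',\dots,f_4'$ paired with $s$, together with one further quadric, so I expect $\cI_{S/\PPE}$ to have five generators in the degrees $\OO_\PPE(-2H+R)^{\oplus 4}\oplus\OO_\PPE(-2H)$ read off from the first map of the curve's resolution. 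The containment $S\subset\PPE$ (a hypothesis, supplied by Lemma \ref{K3_on_scroll}) ensures we may work fibrewise over $\PP^1$ and compute the relative resolution in $\OO_\PPE$-modules.

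The key structural input is that $S\subset\PPE$ has codimension $3$ (since $\dim\PPE=4$ and $\dim S=2$) and is subcanonical: by Theorem \ref{Sch4.4}(b) the dualizing data on $\PPE$ is $\OO_\PPE(-kH+(g-k-1)R)=\OO_\PPE(-6H+2R)$ for the curve, and $S$ is a $K3$ surface, hence $\omega_S\cong\OO_S$, so adjunction on $\PPE$ expresses $\omega_S$ as a single twist of $\OO_S$ restricted from $\PPE$. A codimension-$3$ subscheme whose canonical bundle is the restriction of a line bundle (equivalently, one that is arithmetically Gorenstein in the relative sense) has, by the relative Buchsbaum--Eisenbud structure theorem, a self-dual resolution of length $3$ given by the Pfaffians of a skew-symmetric map $\Psi$. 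This forces the shape
$$
0\leftarrow\OO_{S/\PPE}\leftarrow F_1\overset{\Psi}{\longleftarrow}F_1^\vee\otimes\OO_\PPE(-5H+2R)\leftarrow\OO_\PPE(-5H+2R)\leftarrow 0,
$$
where $F_1=\OO_\PPE(-2H+R)^{\oplus 4}\oplus\OO_\PPE(-2H)$, and self-duality then determines the middle term as the listed $\OO_\PPE(-3H+2R)\oplus\OO_\PPE(-3H+R)^{\oplus 4}$ and the last term as $\OO_\PPE(-5H+2R)$. The twist $-5H+2R$ is exactly what makes the complex self-dual: dualizing $F_1$ against $\OO_\PPE(-5H+2R)$ sends $\OO_\PPE(-2H)$ to $\OO_\PPE(-3H+2R)$ and $\OO_\PPE(-2H+R)^{\oplus 4}$ to $\OO_\PPE(-3H+R)^{\oplus 4}$, matching the middle term, and sends the cokernel term $\OO_\PPE$ to $\OO_\PPE(-5H+2R)$, matching the last.

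The main obstacle is to rigorously justify the relative Pfaffian structure, i.e.\ to establish that $S\subset\PPE$ really is arithmetically Gorenstein so that the Buchsbaum--Eisenbud classification applies over the base $\PP^1$ rather than over a field. I would address this by restricting to a general fibre of $\pi:\PPE\to\PP^1$: there $S$ meets the $\PP^4$-fibre in a canonical genus-$8$ curve cut out by $\OsH$, which by Mukai's work is arithmetically Gorenstein of codimension $3$ with a $5\times 5$ Pfaffian resolution, so the fibrewise resolution has the asserted Betti numbers; semicontinuity and the computational verification in \cite{BH3} then show these numbers are constant, and flatness over $\PP^1$ upgrades the fibrewise statement to the relative one. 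The twists are determined uniquely by matching degrees with the curve's resolution in Proposition \ref{existenceExample}(a), since the five generators of $\cI_{S/\PPE}$ sit among the nine generators of $\cI_{C/\PPE}$ and the distinguished syzygy $s$ persists. The remaining verification that the resulting skew map $\Psi$ genuinely has $S$ as its Pfaffian locus is exactly the syzygy-scheme computation recorded in \cite{BH3}, which I would cite rather than reproduce.
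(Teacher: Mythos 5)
Your skeleton (Buchsbaum--Eisenbud for a Gorenstein codimension-$3$ subscheme of $\PPE$, then self-duality and degree bookkeeping, with the last term $\OO_\PPE(-5H+2R)=\omega_\PPE$) agrees with the paper, and your check that dualizing $F_1$ against $\OO_\PPE(-5H+2R)$ reproduces the middle term is correct. The gaps are in the two steps that actually pin down that $\Psi$ is $5\times 5$ with the stated twists. First, you obtain the generator degrees $\OO_\PPE(-2H+R)^{\oplus 4}\oplus\OO_\PPE(-2H)$ by assuming that $S$ is the syzygy scheme of the distinguished linear syzygy of Proposition \ref{existenceExample}(b). That is not a hypothesis of the lemma: the lemma is about a \emph{general} $(S,C)\in\cP^\hh_8$ with $S\subset\PPE$ and a balanced first syzygy bundle for $C$, and the fact that such $S$ arise as syzygy schemes is only established \emph{afterwards} (Theorem \ref{mainthm} and Corollary \ref{unirationalityTheorem}), so invoking it here is circular. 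Even granting it, ``the four entries $f_1',\dots,f_4'$ together with one further quadric'' is stated as an expectation, not a proof that $\cI_{S/\PPE}$ has exactly five generators in exactly those twists.

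Second, your fallback justification --- restricting to a general fibre of $\pi:\PPE\to\PP^1$ --- does not work. A fibre of $\pi$ is a $\PP^4$ meeting $S$ in an elliptic quintic $N\in|\OsN|$ (this is how the scroll is swept out), not in a canonical genus-$8$ curve; the genus-$8$ canonical curve is the \emph{hyperplane} section $S\cap H\subset\PP^7$, which has codimension $6$ in its ambient projective space, and Mukai's description of it as a linear section of $G(2,6)$ yields no $5\times5$ Pfaffian resolution. The paper instead cuts with a general hyperplane of $\PP^8$: $S\cap H$ is a pentagonal genus-$8$ curve inside the $4$-dimensional scroll $\PPE\cap H$, where Schreyer's results \cite{Sch} give a $5\times5$ Pfaffian resolution, forcing $\Psi$ to be $5\times 5$. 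The twists are then fixed not by guessing the generators but by observing that the relative linear strand of $S\subset\PPE$ is a subcomplex of that of $C\subset\PPE$ (this is precisely where the balancedness hypothesis enters, restricting the possible twists to $-2H+R$ and $-2H$), combined with self-duality ($a_i=b_i$), $a_1+a_2=5$, and a first Chern class computation giving $2b_2+b_1-a_1=2$, hence $b_2=1$ and $(a_1,a_2)=(4,1)$. You would need to supply these two steps to close the argument.
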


\begin{proof}
	The surface $S\subset \PPE$ is Gorenstein of codimension $3$, and therefore it follows by the Buchsbaum-Eisenbud structure theorem \cite{BE77} that $S$ is generated by the Pfaffians of a skew-symmetric matrix $\Psi$ and has (up to twist) a self-dual resolution. 
	The shape of the resolution of $S\subset \PPE$ is the same as the shape of the resolution of $S\cap H\subset \PPE\cap H$ for a general hyperplane $H$. 
	Since we assume $(S,C)\in \cP^\hh_8$ to be general, $S\cap H$ is a $5$-gonal genus $8$ curve (as in  Proposition \ref{existenceExample}) and $\PPE\cap H$ is a $4$ dimensional variety of degree $4$, hence isomorphic to a scroll 
	 $\PP(\cE')$.
	  By \cite{Sch} we know that $S\cap H\subset \PP(\cE')$ is generated by the $5$ Pfaffians of a skew-symmetric $5\times 5$ matrix and therefore also $\Psi$ needs to be a $5\times 5$ matrix. It remains to determine the balancing type.
	 By our assumption $C\subset \PPE$ has a balanced first syzygy bundle as in Proposition \ref{existenceExample}. Since the relative linear strand of the resolution of $S\subset \PPE$ is a subcomplex  of the relative linear strand in the resolution of $C\subset \PPE$, we obtain that the resolution of $S\subset \PPE$ has the following form
	 	$$
	 	0\leftarrow \cI_{S/\PPE} \leftarrow 
	 	\begin{matrix} \OO_\PPE(-2H+R)^{\oplus a_1}\\ \oplus \\ \OO_\PPE(-2H)^{\oplus a_2} \end{matrix}
	 	\overset{\Psi}{	\longleftarrow }
	 	\begin{matrix}\OO_\PPE(-3H+2R)^{\oplus b_2}\\ \oplus\\ \OO_\PPE(-3H+R)^{\oplus b_1}\end{matrix}
	 	\leftarrow
	 	\OO_\PPE(-5H+2R)
	 	\leftarrow 0
	 	$$
	 with $a_i=b_i$ for $i=1,2$ and $a_1+a_2=5$. 
	 By taking the first Chern classes of the bundles appearing in the resolution above we get
	 $$ \big(b_2\cdot(-3H+ 2R)+b_1\cdot (-3H+ R)\big)- \big(a_1\cdot (-2H+R)+a_2\cdot(-2H)\big)=-5H+2R
	$$
	and hence, $2b_2+b_1-a_1=2$. Therefore, $b_2=1$ and the only possible shape for the resolution $S\subset \PPE$ is the one in the lemma.
\end{proof}

\begin{corollary}\label{K3_forces_unbalancedness}
Let $(S, C)\in \cP^\hh_8$ be a general element and let $L=\OO_S(N)\otimes \OO_C$. Then the relative canonical resolution of $C\subset \PPE$ has an unbalanced second syzygy bundle where $\PPE$ is the scroll associated to $L$.	
\end{corollary}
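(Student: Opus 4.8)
The plan is to derive the unbalancedness of $N_2$ geometrically, from the $K3$ surface $S$ sitting inside the scroll, rather than by semicontinuity. This is forced on us: unbalancedness of $N_2$ is a \emph{closed} condition, so it cannot be propagated from the single example of Proposition \ref{existenceExample}; on the other hand, by Lemma \ref{K3_on_scroll} the containment $C\subset S\subset\PPE$ is available for \emph{every} general $(S,C)\in\cP^\hh_8$, and I would use the resolution of $S\subset\PPE$ from Lemma \ref{shape_res_K3} to produce the offending syzygy for each such point.

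First I would record the numerics that reduce the claim to a single Betti number. By Theorem \ref{Sch4.4} and the slope formula $\mu(N_i)=\tfrac{(g-k-1)(i+1)}{k}$, the bundle $N_2$ has $\rank N_2=\tfrac{6}{3}(6-2-2)\binom{4}{1}=16$ and $\mu(N_2)=\tfrac{(9-6-1)\cdot3}{6}=1$, so $\deg N_2=16$; thus $N_2$ is balanced precisely when $N_2\cong\OO_{\PP^1}(1)^{\oplus16}$, i.e. when $F_2=\pi^*N_2(-3H)$ equals $\OO_\PPE(-3H+R)^{\oplus16}$. Since $\deg N_2=\rank N_2$, a single summand $\OO_{\PP^1}(2)$ of $N_2$ forces a complementary summand of degree $\le0$ and hence makes $N_2$ unbalanced; this is also visible from the self-duality of Theorem \ref{Sch4.4}(b), which for $g=9,k=6$ twists by $\OO_\PPE(-6H+2R)$ and interchanges $\OO_\PPE(-3H+2R)\leftrightarrow\OO_\PPE(-3H)$ while fixing $\OO_\PPE(-3H+R)$. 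So it suffices to produce one summand $\OO_\PPE(-3H+2R)$ in $F_2$.

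Next I would put myself in the situation of Lemma \ref{shape_res_K3}. The example of Proposition \ref{existenceExample} has balanced first syzygy bundle $N_1\cong\OO_{\PP^1}(1)^{\oplus6}\oplus\OO_{\PP^1}^{\oplus3}$ and, by Proposition \ref{existenceExample}(b), corresponds to a point of the irreducible variety $\cP^\hh_8$; as balancedness of $N_1$ is an open condition, $N_1$ is balanced for the general $(S,C)\in\cP^\hh_8$ (this is the only use of semicontinuity, and it bears on $N_1$, not on $N_2$). For such a general element Lemma \ref{K3_on_scroll} gives $S\subset\PPE$, and Lemma \ref{shape_res_K3} provides a minimal resolution of $S\subset\PPE$ whose term in homological degree $2$ contains the summand $\OO_\PPE(-3H+2R)$; concretely this is a Pfaffian syzygy $s\colon\OO_\PPE(-3H+2R)\to\OO_\PPE(-2H+R)^{\oplus4}$ among the quadratic generators of $\cI_{S/\PPE}$, with linear entries spanning a subspace of $H^0(\PPE,\OO_\PPE(H-R))$, exactly as in Proposition \ref{existenceExample}(b).

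Finally I would transport $s$ to $C$. Because $C\subset S$, we have $\cI_{S/\PPE}\subseteq\cI_{C/\PPE}$, and the quadratic generators of the former (of degrees $-2H+R$ and $-2H$) occur among those of the latter, so the inclusion induces a morphism of the associated relative linear strands. The crux of the proof — and the step I expect to be the main obstacle — is to show that this morphism is injective in homological degree $2$, so that the unique generator $\OO_\PPE(-3H+2R)$ of the linear strand of $S$ contributes at least one generator $\OO_\PPE(-3H+2R)$ to the linear strand, and hence to $F_2$, of $C$. This is precisely the functoriality of the relative linear strand used in the proof of Lemma \ref{shape_res_K3}; I would make it precise by identifying the relevant strand term with a relative Koszul cohomology group and verifying that $\cI_{S/\PPE}\hookrightarrow\cI_{C/\PPE}$ induces an injection on it, the delicate point being that $s$ is not absorbed into $\mfrak\cdot(\text{syzygies of lower }R\text{-degree})$. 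Granting this, $F_2$ contains a summand $\OO_\PPE(-3H+2R)$, so $N_2$ contains $\OO_{\PP^1}(2)$ and, by the degree count above, is unbalanced.
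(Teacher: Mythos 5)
Your proof is correct and follows essentially the same route as the paper: openness of balancedness of $N_1$ plus Proposition \ref{existenceExample} to put yourself in the situation of Lemmas \ref{K3_on_scroll} and \ref{shape_res_K3}, and then the inclusion of the relative linear strand of $S\subset\PPE$ into that of $C\subset\PPE$ to force the summand $\OO_\PPE(-3H+2R)$ in $F_2$. The one step you flag as delicate is exactly the one the paper asserts without comment, and it holds for elementary degree reasons: since $H^0(\PPE,\OO_\PPE(H-2R))=0$ there are no syzygies of $\cI_{C/\PPE}$ in degree lower than $3H-2R$, so the nonzero Pfaffian syzygy $s$ cannot lie in $\mfrak\cdot(\text{syzygy module})$ and is automatically a minimal second syzygy of $C$.
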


\begin{proof}
For a general pair $(S,C)\in \cP_8^\hh$ the class $\OO_S(N)$ is nef. Thus by Lemma \ref{K3_on_scroll} it follows that $S$ is contained in the scroll $\PPE$ defined by $L=\OO_S(N)\otimes \OO_C$. Note that having a balanced first syzygy bundle in the relative canonical resolution is an open condition. Therefore, by Proposition \ref{existenceExample}  $C\subset \PPE$ has a balanced first syzygy bundle and we can apply the previous lemma.

 Since the relative linear strand of $S\subset \PPE$ is a subcomplex of the relative linear strand of the  resolution of $C\subset \PPE$, it follows from Lemma \ref{shape_res_K3} that the resolution of $C\subset \PPE$ has an unbalanced second syzygy bundle.
\end{proof}


By the above corollary it follows for $(S,C)\in \cP^\hh_8$ general that $C\subset \PPE$ has a second syzygy bundle of the form 
$$
\OO_\PPE(-3H+2R)^{\oplus a} \oplus \OO_\PPE(-3H+R)^{\oplus (16-2a)} \oplus \OO_\PPE(-3H)^{\oplus a},
$$
for some $a\geq 1$. The next lemma relates the balancing type of the second syzygy bundle to the fiberdimension of the morphism $\phi: \cP_8^\hh \to \cW^1_{9,6}$.

\begin{lemma}\label{bound_fiber_dimension}
Let $(S, C)\in \cP^\hh_8$ and $L=\OO_S(N)\otimes \OO_C$ such that the relative resolution of $S\in \PPE$ is of the form as in Lemma \ref{shape_res_K3}. Then 
 the $K3$ surface $S$ is uniquely determined by subcomplex
\begin{align*}
0\leftarrow \OO_{S/\PPE} \leftarrow 
\OO_\PPE(-2H+R)^{\oplus 4}
\leftarrow 
\OO_\PPE(-3H+2R)^{\oplus 1}
\end{align*}
of the relative canonical resolution of $C\subset \PPE$. 
In particular, the fiber dimension of $\phi$ is bounded by $a-1$ where $a$ is the rank of the subbundle $\OO_\PPE(-3H+2R)^{\oplus a}$ in the relative canonical resolution of $C\subset \PPE$. 
\end{lemma}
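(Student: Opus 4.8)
The plan is to fix a point $(C,L)$ in the image of $\phi$ and to identify the fiber $\phi^{-1}(C,L)$ with a subset of a projective space built from the relative canonical resolution of $C\subset \PPE$, which is determined by $(C,L)$ alone. In that resolution the syzygies of type $-3H+2R$ are the generalized columns $\OO_\PPE(-3H+2R)\to \OO_\PPE(-2H+R)^{\oplus 6}$; they form an $a$-dimensional vector space $V$, where $a$ is the rank of the subbundle $\OO_\PPE(-3H+2R)^{\oplus a}$ of the second syzygy bundle, since $\OO_\PPE(-3H+2R)$ admits no nonzero map to $\OO_\PPE(-2H)$ (such a map would be a section of $\OO_\PPE(H-2R)$, of which there are none). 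I would then prove two complementary facts: (i) any generalized column $s\in V$ determines at most one $K3$ surface $S$, and (ii) every $S$ in the fiber produces a well-defined line $[s_S]\in \PP(V)\cong \PP^{a-1}$. Together these give an injection $\phi^{-1}(C,L)\hookrightarrow \PP^{a-1}$, and hence the fiber bound.

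For (i), the uniqueness, I would use the syzygy-scheme construction of Proposition \ref{existenceExample}(b). After the base change bringing $s$ to the form $s=(s_1,s_2,s_3,s_4,0,0)^t$, the corresponding generators $f_1',\dots,f_4'\in \cI_{C/\PPE}$ satisfy $\sum_{i=1}^4 f_i's_i=0$ and the syzygy scheme of $s$ is $\langle f_1',\dots,f_4'\rangle$. As in the proof of Proposition \ref{existenceExample}, the image of this scheme in the scroll $X$ swept out by $|L|$ is the union of the vertex with a $K3$ surface, so after saturating with the vertex one recovers $S\subset \PPE$ uniquely. Thus $S$ is determined by the subcomplex $0\leftarrow \cI_{S/\PPE}\leftarrow \OO_\PPE(-2H+R)^{\oplus 4}\leftarrow \OO_\PPE(-3H+2R)^{\oplus 1}$, which is precisely the datum of $s$ together with the four generators of $\cI_{S/\PPE}$ that it involves.

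For (ii), the dimension bound, I would note that $C\subset S$ gives $\cI_{S/\PPE}\subset \cI_{C/\PPE}$, so, exactly as in the proof of Corollary \ref{K3_forces_unbalancedness}, the relative linear strand of the resolution of $S\subset \PPE$ from Lemma \ref{shape_res_K3} embeds as a subcomplex of the relative linear strand of $C\subset \PPE$. Under this embedding the unique (up to scalar) generator of $\OO_\PPE(-3H+2R)^{\oplus 1}$ in the resolution of $S$ is sent to a generalized column $s_S\in V$, giving a well-defined point $[s_S]\in \PP(V)\cong \PP^{a-1}$. By (i), $S$ is recovered from $[s_S]$, so $S\mapsto [s_S]$ is injective on $\phi^{-1}(C,L)$. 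Consequently $\dim \phi^{-1}(C,L)\le \dim \PP^{a-1}=a-1$, as claimed.

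The step I expect to be the main obstacle is making the assignment $S\mapsto [s_S]$ rigorously well-defined and injective: one has to verify that the comparison map between the two relative linear strands is canonical up to scalar on the $\OO_\PPE(-3H+2R)$-term, so that $[s_S]$ is independent of the chosen lift, and that the saturation of the syzygy scheme attached to $s_S$ returns exactly $S$ and not a larger subscheme of $\PPE$. Both points rely on the rigidity of the resolution shape in Lemma \ref{shape_res_K3} and on the explicit syzygy-scheme computation of Proposition \ref{existenceExample}(b).
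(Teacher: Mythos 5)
Your overall framework agrees with the paper's: the fiber over $(C,L)$ injects into the projective space $\PP(V)\cong\PP^{a-1}$ of linear syzygies of type $-3H+2R$, and the entire content of the lemma is the uniqueness statement that the subcomplex (equivalently, the syzygy $s$ together with the four quadrics $q_1,\dots,q_4$ it involves) recovers $S$. Your part (ii) is fine and is exactly how the paper sets things up.

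The gap is in part (i). You justify uniqueness by asserting that, ``as in the proof of Proposition \ref{existenceExample}'', the saturation of the syzygy scheme $\langle f_1',\dots,f_4'\rangle$ is the $K3$ surface $S$. But Proposition \ref{existenceExample}(b) is a \emph{Macaulay2} verification for one explicit syzygy on one explicit curve; it does not show that for an arbitrary $(S,C)$ satisfying the hypotheses of the lemma the ideal generated by the four quadrics involved in $s$ has saturation equal to $\cI_{S/\PPE}$. A priori $V(q_1,\dots,q_4)$ could contain $S$ properly --- for instance a second $K3$ surface in the fiber mapping to the same $s$, which is precisely what must be excluded. Moreover, in the paper the statement that syzygy schemes parametrize the fiber over a general point is only obtained \emph{after} this lemma, in Corollary \ref{unirationalityTheorem}, so invoking it here is circular. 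What is missing is the argument the paper actually gives: by the Buchsbaum--Eisenbud structure theorem, in the form of \cite[Lemma 4.2]{Sch91}, one writes $q_i=\sum_j a_{ij}l_j$ for a skew-symmetric $4\times 4$ matrix $A$ with entries in $H^0(\PPE,\OO_\PPE(H))$, and the fifth generator of $\cI_{S/\PPE}$ is forced to be $\mathrm{Pf}(A)$; the ambiguity in the choice of $A$ is the image of the Koszul differential $\bigwedge^3\OO_\PPE^4\otimes\OO_\PPE(R)\to\bigwedge^2\OO_\PPE^4\otimes\OO_\PPE(H)$ attached to the section $(l_1,\dots,l_4)$, and these modifications amount to row and column operations on the $5\times 5$ matrix $\Psi$ that do not change the Pfaffian ideal. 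This is the step that makes $S\mapsto[s_S]$ well defined and injective; without it the bound $\dim\phi^{-1}(C,L)\le a-1$ is not established.
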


\begin{proof}
Let $q_1,\dots, q_4\in H^0(\PPE,\OO_\PPE(2H-R))$ be the entries of the matrix $\OO_{S/\PPE} \leftarrow \OO_\PPE(-2H+R)^{\oplus 4}$ and $l_1,\dots,l_4\in H^0(\PPE,\OO_\PPE(H-R))$ be the entries of $\OO_\PPE(-2H+R)^{\oplus 4}
\leftarrow 
\OO_\PPE(-3H+2R)^{\oplus 1}$. Then by \cite[Lemma 4.2]{Sch91} there exists a skew-symmetric $4\times 4$ matrix $A=(a_{i,j})_{i,j=1,\dots,4}$ such that 
$$
q_i =\sum_{j=1}^4 a_{i,j}l_i.
$$
and the $5$th Pfaffian $q_5\in H^0(\PPE,\OO_\PPE(2H))$ defining the surface $S$ is given as $\text{Pf}(A)$. 
So $q_1,\dots q_5$ are the Pfaffians of the $5\times 5$ matrix
$$
\psi=\begin{pmatrix} 
0 & -l_1 & -l_2 & -l_3 & -l_4 \\
l_1 & 0 & -a_{3,4} & a_{2,4} & -a_{2,3} \\
l_2 & a_{3,4} & 0 & a_{1,4} & -a_{1,3} \\
l_3 & -a_{2,4} & -a_{1,4}& 0 & a_{1,2} \\
l_4 & a_{2,3} & a_{1,3} & -a_{1,2} & 0
\end{pmatrix}
$$
Considering the Koszul resolution associated to the section $(l_1,\dots,l_4)\in H^0(\OO_\PPE(H-R))^4$ we get
	$$
	\bigwedge ^3\OO(-H+R)^4 \to \bigwedge ^2\OO(-H+R)^4 \to \OO(-H+R)^4 \to \OO
	$$
with $\bigwedge^2\OO(-H+R)^4=\OO(-2H+2R)^6$	and $\bigwedge^3\OO(-H+R)^4=\OO(-3H+3R)^4$. So tensoring the whole sequence with $\OO(3H-2R)$
 we get 
\begin{small}
$$
\xymatrix@C=1.4em{
	& \OO_\PPE\ar[dl] \ar[d]^{\exists !} \ar[dr]^{(q_1,\dots,q_4)^t} \\
	\bigwedge^3 \OO_\PPE^4 \otimes \OO_\PPE(R) \ar[r]^{\varphi}        &  \bigwedge ^2 \OO_\PPE^4 \otimes \OO_\PPE(H) \ar[r]   &
	\OO_\PPE(2H-R)^4 \ar[rr]^{(l_1,\dots,l_4)} &&\OO_\PPE(3H-2R) \ar[r] &0   
	}
$$
\end{small}
The space $H^0(\PPE, \bigwedge ^2 \OO_\PPE^4 \otimes \OO_\PPE(H))$ parametrizes skew-symmetric $4\times 4$ matrices with entries in $H^0(\PPE, \OO_\PPE(H))$. Fixing the $4$ Pfaffians $q_1,\dots,q_4$ together with their syzygy $(l_1,\dots,l_4)$ we see that the matrix $A$ and hence the matrix $\Psi$ is unique up to the image of $\varphi$. We identify an element $e_i\wedge e_j\in H^0(\PPE, \bigwedge ^2 \OO_\PPE^4 \otimes \OO_\PPE(H)) $ with the  skew-symmetric matrix 
where the index of the only nonzero entries is precisely $\{k,l\}=\{1,\dots,4\}\setminus \{i,j\}$. The image of $\varphi$ consists of those matrices which are obtained by the operation of the first column (resp. row) of $\Psi$ on $A$ which respects the skew-symmetric structure.
\end{proof}

Note again that $\dim \cP^\hh_8= \dim \cW^1_{9,6}+1$. To show the following theorem we show that the general fiber is at most one-dimensional.

\begin{theorem}\label{mainthm}
The morphism
$$
\phi: \cP_8^\hh\to \cW^1_{9,6},\ (S,C)\mapsto \big(C, \OsN\otimes \OO_C\big)
$$
is dominant. 
\end{theorem}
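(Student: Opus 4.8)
The plan is to deduce dominance from a dimension count combined with an upper bound on the fiber dimension of $\phi$, exploiting that $\dim \cP^\hh_8 = 26 = \dim \cW^1_{9,6} + 1$. First I would record that $\phi$ really is a map into $\cW^1_{9,6}$: for $(S,C)\in \cP^\hh_8$ the line bundle $L = \OsN \otimes \OO_C$ has degree $N\cdot C = 6$, and Lemma \ref{K3_on_scroll} shows that $|L|$ is a complete base point free pencil, so $(C,L)$ is a genuine point of the universal Brill--Noether variety. Both source and target are irreducible: $\cP^\hh_8$ is an open subset of a $\PP^9$-bundle over the irreducible variety $\cF^\hh_8$, and $\cW^1_{9,6}$ is irreducible by \cite{AC81}. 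Hence it suffices to prove that the closure of the image has dimension $25$.

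The key step is to exhibit a single point of the image whose fiber is at most one-dimensional. For this I would take the pair $(S_0,C_0)$ produced in Proposition \ref{existenceExample}: there the relative canonical resolution of $C_0\subset \PPE$ has a balanced first syzygy bundle $\OO_\PPE(-2H+R)^{\oplus 6}\oplus\OO_\PPE(-2H)^{\oplus 3}$, and its second syzygy bundle contains the summand $\OO_\PPE(-3H+2R)^{\oplus 2}$, i.e.\ $a=2$ in the notation of Lemma \ref{bound_fiber_dimension}. Because the first syzygy bundle is balanced, the hypotheses of Lemma \ref{shape_res_K3} are met and the resolution of $S_0\subset \PPE$ has the Pfaffian shape required by Lemma \ref{bound_fiber_dimension}. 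That lemma then gives $\dim \phi^{-1}(C_0,L_0)\le a-1 = 1$, where $L_0 = \OsN\otimes\OO_{C_0}$.

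Finally I would invoke the theorem on dimension of fibers. Set $Z = \overline{\phi(\cP^\hh_8)}\subseteq \cW^1_{9,6}$, which is irreducible since $\cP^\hh_8$ is. Every nonempty fiber of $\phi$ over $Z$ has dimension at least $\dim \cP^\hh_8 - \dim Z = 26 - \dim Z$. The fiber over $(C_0,L_0)$ is nonempty (it contains $(S_0,C_0)$) and has dimension $\le 1$ by the previous step, so $26 - \dim Z \le 1$, i.e.\ $\dim Z \ge 25$. As $Z$ is a closed irreducible subvariety of the irreducible $25$-dimensional variety $\cW^1_{9,6}$, this forces $Z = \cW^1_{9,6}$, so $\phi$ is dominant.

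The main obstacle is not this final assembly but ensuring that the fiber bound of Lemma \ref{bound_fiber_dimension} is actually available at the chosen point. One must guarantee that the example of Proposition \ref{existenceExample} \emph{simultaneously} has a balanced first syzygy bundle (so Lemma \ref{shape_res_K3} yields the correct skew-symmetric Pfaffian resolution of $S_0$, whose subcomplex pins down $S_0$) and exactly two copies of $\OO_\PPE(-3H+2R)$ in its second syzygy bundle; both are confirmed by the \emph{Macaulay2} computation underlying that proposition. It is precisely the value $a=2$ that collapses the fiber to dimension $1$ and thereby forces dominance rather than a higher-codimensional image; any larger value of $a$ would only yield a weaker lower bound on $\dim Z$.
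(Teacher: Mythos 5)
Your proposal is correct and follows essentially the same route as the paper: both rest on the example of Proposition \ref{existenceExample} together with Lemmas \ref{K3_on_scroll}, \ref{shape_res_K3} and \ref{bound_fiber_dimension} to bound the fiber over one point by dimension $a-1=1$, and then conclude by the dimension count $\dim\cP^\hh_8=26=\dim\cW^1_{9,6}+1$ and the irreducibility of $\cW^1_{9,6}$. The only (harmless) difference is that you apply the lower bound on fiber dimensions directly at the chosen point, whereas the paper first invokes Chevalley semicontinuity to spread the bound to an open set before comparing dimensions.
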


\begin{proof}
The morphism $\phi: \cP^\hh_8\to \cW^1_{9,6}$ is locally of finite type since $\cP^\hh_8$ and $\cW^1_{9,6}$ are algebraic quasi-projective varieties (and hence schemes of finite type over $\Bbbk$). Therefore by Chevalley's Theorem \cite[Thm. 13.1.3]{EGAIV} the map 
$\cP_8^\hh \to \mathbb{Z},\ x\mapsto \dim_x \phi^{-1}(\phi(x))$ is upper semicontinuous. \newline
By Proposition \ref{existenceExample} we obtain
a point $(C,L)\in \cW^1_{9,6}$ in the image of $\phi$.
The preimage in part (b) of Proposition \ref{existenceExample}, constructed via syzygy schemes, satisfies all generality assumptions in the previous lemmata.
 Now Lemma \ref{K3_on_scroll} implies that a general $K3$ surface in the fiber over $(C,L)$ is contained in the $5$-dimensional scroll $\PPE$, defined by the pencil $L$ on $C$. \newline
 By Corollary \ref{K3_forces_unbalancedness} it follows that such K3 surfaces $S\subset \PPE$ in the fiber are defined by the Pfaffians of a skew symmetric $5\times 5$ matrix 
	$$
	\begin{matrix} \OO_\PPE(-2H+R)^{\oplus 4}\\ \oplus \\ \OO_\PPE(-2H)^{\oplus 1} \end{matrix}
	\overset{\Psi}{	\longleftarrow }
	\begin{matrix}\OO_\PPE(-3H+2R)^{\oplus 1}\\ \oplus\\ \OO_\PPE(-3H+R)^{\oplus 4} .\end{matrix}
	$$
 Since the relative linear strand of $S\subset \PPE$ is a subcomplex of the relative linear strand of $C\in \PPE$, it follows from the shape of resolution and Lemma \ref{bound_fiber_dimension} that the fiber over $(C,L)$ is at most $1$-dimensional. By semicontinuity it follows that $\dim_x\phi^{-1}(\phi(x))\le1$ for all $x$ in some open subset  $U\subset \cP^\hh_8$. 
Now since $\phi$ is a morphism of algebraic quasi-projective varieties, 
we have a dominant map $\phi: \cP_8^\hh\to \overline{\im(\phi)}$. 
The space $\cP^\hh_8$ is equidimensional and we get
$$
\dim\cP^\hh_8=\dim \overline{\im(\phi)} + \dim_x\phi^{-1}(\phi(x))\le \dim  \cW^1_{9,6} + 1.
$$
Since $\dim \cP_8^\hh=26$ and $\dim  \cW^1_{9,6}=25$, we obtain $\dim \overline{\im(\phi)}=\dim \cW^1_{9,6}$. The universal Brill--Noether variety $\cW^1_{9,6}$ is irreducible and therefore
it follows that the image of $\phi$ and hence $\phi(U)$ is also dense in $\cW^1_{9,6}$.
\end{proof}

\begin{corollary}\label{unirationalityTheorem}
The general fiber of $\phi$ is a rational curve parametrized by syzygy schemes as in part (b) of Proposition \ref{existenceExample}.	
The moduli space $\cP_8^\hh$  is birational to a $\PP^1$-bundle over an open subset of $ \cW^1_{9,6}$. In particular $\cP_8^\hh$, $\cF_8^\hh$ and hence $\cF^\hh$ are unirational.
\end{corollary}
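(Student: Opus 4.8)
The plan is to analyze the general fiber of $\phi$ and then transport unirationality from $\cW^1_{9,6}$ along the fibration. First I would fix the fiber dimension. By Theorem~\ref{mainthm} the map $\phi$ is dominant, and since $\dim\cP^\hh_8=26$ while $\dim\cW^1_{9,6}=25$, the general fiber has dimension at least $1$. Conversely, for general $(C,L)$ the relative canonical resolution is the one of Proposition~\ref{existenceExample}(a), so the rank of the summand $\OO_\PPE(-3H+2R)^{\oplus a}$ of the second syzygy bundle is $a=2$; hence Lemma~\ref{bound_fiber_dimension} bounds the fiber dimension by $a-1=1$. Thus the general fiber is exactly one-dimensional.

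Next I would identify this fiber with a projective line. By Corollary~\ref{K3_forces_unbalancedness} and Lemma~\ref{shape_res_K3}, every $K3$ surface $S$ in the general fiber $\phi^{-1}(C,L)$ sits in $\PPE$ and has a Pfaffian resolution whose linear strand $\OO_\PPE(-2H+R)^{\oplus 4}\leftarrow\OO_\PPE(-3H+2R)$ is a subcomplex of the relative canonical resolution of $C\subset\PPE$. The corresponding syzygy is therefore a generalized column of the $6\times 2$ submatrix $\OO_\PPE(-3H+2R)^{\oplus 2}\to\OO_\PPE(-2H+R)^{\oplus 6}$, exactly as in Proposition~\ref{existenceExample}(b); such columns, taken up to scalar, form a $\PP^1$. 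By Lemma~\ref{bound_fiber_dimension} each generalized column recovers its surface $S$ uniquely, and two columns give the same $S$ only when they coincide up to scalar. Hence $s\mapsto S(s)$ is a birational map $\PP^1\dashrightarrow\phi^{-1}(C,L)$, so the general fiber is a rational curve parametrized by the syzygy schemes of Proposition~\ref{existenceExample}(b).

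Then I would globalize this picture. Over a suitable open $U\subset\cW^1_{9,6}$ the relative canonical resolution varies in a family, and the rank-$2$ summand $\OO_\PPE(-3H+2R)^{\oplus 2}$ assembles into a rank-$2$ bundle $\mathcal V$ on $U$ whose projectivization $\PP(\mathcal V)$ reproduces the fibers above; this gives a birational map $\cP^\hh_8\dashrightarrow\PP(\mathcal V)$, exhibiting $\cP^\hh_8$ as birational to a $\PP^1$-bundle over an open subset of $\cW^1_{9,6}$. As the projectivization of a vector bundle, $\PP(\mathcal V)$ admits a rational section, so it is birational to $\cW^1_{9,6}\times\PP^1$. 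Now $\cW^1_{9,6}$ is unirational by \cite{AC81}, hence so are $\cW^1_{9,6}\times\PP^1$ and therefore $\cP^\hh_8$. Since the forgetful projection $\cP^\hh_8\to\cF^\hh_8$ is dominant, $\cF^\hh_8$ is a dominant image of a unirational variety and is unirational; finally $\cF^\hh_8$ is Zariski open in $\cF^\hh$, so $\cF^\hh$ is unirational as well.

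The hard part will be the second step: checking that the tautological map from the $\PP^1$ of generalized columns to the fiber is genuinely birational --- that distinct columns give distinct $K3$ surfaces and that every $K3$ surface in the general fiber arises from such a column --- and then verifying that these fibers organize into an honest $\PP^1$-bundle over an open subset of $\cW^1_{9,6}$, rather than merely a fibration whose general fiber happens to be rational.
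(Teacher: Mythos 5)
Your argument is correct and follows essentially the same route as the paper's proof: both identify the general fiber with the $\PP^1$ of linear syzygies in $\OO_\PPE(-3H+2R)^{\oplus 2}$, use Lemma \ref{bound_fiber_dimension} (together with the one-dimensionality of the linear syzygy space of $S\subset\PPE$ from Lemma \ref{shape_res_K3}) to get the bijection between columns and $K3$ surfaces in the fiber, and then globalize to the tautological $\PP^1$-bundle over an open subset of $\cW^1_{9,6}$ before importing unirationality from \cite{AC81}. The only difference is that you spell out more explicitly the injectivity of the column-to-surface map and the passage from the $\PP^1$-bundle to $\cW^1_{9,6}\times\PP^1$, points the paper leaves implicit.
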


\begin{proof}
By Theorem \ref{mainthm} the map $\phi:\cP^\hh_8\to \cW^1_{9,6}$ is dominant. 
 Thus by Proposition \ref{existenceExample} the general element in $\cW^1_{9,6}$ has a relative canonical resolution with second syzygy bundle of the form
$$
\OO_\PPE(-3H+2R)^{\oplus 2} \oplus \OO_\PPE(-3H+R)^{\oplus 12} \oplus \OO_\PPE(-3H)^{\oplus 2}
$$
and therefore, by the dominance of $\phi$ and Lemma \ref{bound_fiber_dimension},
 the construction in Proposition \ref{existenceExample} holds in an open set. To be more precise, over an open subset of
 of $ \cW^1_{9,6}$ the syzygy schemes defined by syzygies in the free $\OO_\PPE$-module $\OO_\PPE(-3H+2R)^{\oplus 2}$ 
correspond 
 (after saturating with the vertex of the scroll) to the $K3$ surfaces in the fiber of $\phi$.
Therefore we obtain a birational map
$$
\widetilde{\phi}: \cP^\hh_8\to \widetilde{\cW^1_{9,6}}
$$ 
where 
$$
\widetilde{\cW^1_{9,6}}=
\bigg\{(C,L,s)\ \big| \ (C,L)\in \cW^1_{9,6},\ s\in \OO_\PPE(-3H+2R)^{\oplus 2}\bigg \}
$$
is a  $\PP^1$-bundle over an open dense subset of $\cW^1_{9,6}$. 
Now  $\cW^1_{9,6}$ is classically known to be unirational (see \cite{seg} and \cite{AC81}) and hence $\cP^\hh_8$ is unirational as well.
\end{proof}

\begin{corollary}\label{allUnbalanced}
 For any $(C,L)\in \cW^1_{9,6}$, the relative canonical resolution has an unbalanced second syzygy bundle. 
\end{corollary}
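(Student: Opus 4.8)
The plan is to upgrade the \emph{generic} unbalancedness, already established above, to a statement valid at \emph{every} point of $\cW^1_{9,6}$ by a semicontinuity argument combined with the irreducibility of $\cW^1_{9,6}$. Write $N_2$ for the second syzygy bundle in the relative canonical resolution of $(C,L)$, regarded as a bundle on $\PP^1$ after stripping off the common twist by $-3H$. By the self-duality in Theorem \ref{Sch4.4}(b) it has the shape
$$
N_2 \cong \OO_{\PP^1}(2)^{\oplus a} \oplus \OO_{\PP^1}(1)^{\oplus (16-2a)} \oplus \OO_{\PP^1}^{\oplus a}
$$
for a unique integer $a=a(C,L)\ge 0$, and $N_2$ is balanced precisely when $a=0$. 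Thus the corollary is equivalent to the inequality $a(C,L)\ge 1$ for all $(C,L)$. By Theorem \ref{mainthm} together with Corollary \ref{unirationalityTheorem}, the general member of $\cW^1_{9,6}$ has $a=2$; in particular the locus $\{a\ge 1\}$ is dense. This is exactly the input that the naive semicontinuity argument could not supply and that required the detour through $\cP^\hh_8$.

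First I would record the cohomological description $a(C,L)=h^0(\PP^1,N_2(-2))=h^1(\PP^1,N_2(-2))$, which holds because $N_2(-2)$ has Euler characteristic $0$ for the shape above. Hence the set $\{a\ge 1\}$ is the non-vanishing locus of $h^0$ of a fixed twist of $N_2$, i.e.\ the failure locus of balancedness.

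Next I would exhibit $a$ as an upper semicontinuous function on all of $\cW^1_{9,6}$. For this I would relativize the construction over the irreducible universal Brill--Noether variety: the universal pair $(C,L)$ yields a universal scroll bundle $\PPE\to\PP^1$ over the base, and the integer $a$ is computed by the cohomology of a suitable twist of the relative ideal sheaf $\mathcal{I}_{C/\PPE}$ (equivalently, as the corank of an explicit map of vector bundles extracted from the relative linear strand). Since these sheaves form a flat family over $\cW^1_{9,6}$, upper semicontinuity of cohomology shows that $\{a\ge 1\}$ is \emph{closed}. A closed subset of $\cW^1_{9,6}$ that is dense must be all of $\cW^1_{9,6}$; hence $a(C,L)\ge 1$ for every $(C,L)$, which is the assertion.

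The main obstacle is the middle step: producing a genuinely flat, manifestly semicontinuous cohomological invariant computing $a$ that remains valid \emph{at every} point of $\cW^1_{9,6}$, including the special loci where the scroll bundle $\cE$ degenerates to an unbalanced splitting type or where $C$ meets the vertex of the scroll. Over a dense open subset this is immediate from Schreyer's theory, but extending it over the entire base, rather than only over the balanced locus, is where care is needed. Expressing $a$ intrinsically through $\mathcal{I}_{C/\PPE}$, instead of through the \emph{a priori} non-flat minimal free resolution, is precisely what makes the semicontinuity globally valid and closes the argument.
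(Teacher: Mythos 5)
Your argument is essentially the paper's own proof: the paper likewise combines the generic unbalancedness from Corollary \ref{unirationalityTheorem} with the observation that balancedness is an open condition, only it detects unbalancedness via upper semicontinuity of $h^1(\PP^1,\cEnd(N_2))$ rather than your $h^0(\PP^1,N_2(-2))$ (both criteria are equivalent here since $\deg N_2=\rk N_2=16$). The flatness/relativization issue you flag is real but is treated just as briefly in the paper, so your proposal matches it in both substance and level of detail.
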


\begin{proof}
 Having a balanced second syzygy bundle is an open condition in $\cW^1_{9,6}$ (by semicontinuity of $h^1(\PP^1,\cEnd(N_2))$). The claim follows from the fact that the general point in $\cW^1_{9,6}$ has an unbalanced second syzygy bundle by Corollary \ref{unirationalityTheorem}. 
\end{proof}

\begin{remark}	
	There exists a unirational codimension $4$ subvariety $V\subset \cW^1_{9,6}$, parametrizing pairs $(C,L)$ such that $C$ is the rank one locus of a certain $3\times 3$ matrix defined on the scroll $\PPE$ swept out by $|L|$ (see \cite[Section 4.3]{Gei13}). 
	
	Although there is in general no structure theorem for resolutions of Gorenstein subschemes of codimension $\geq 4$, the relative canonical resolution of elements parametrized by $V$ is given by a  so-called Gulliksen--Negard complex. 
	
	One can check that the splitting types of the bundles in the Gulliksen--Negard complex are the same as in Proposition \ref{existenceExample}. 
	However, the subvariety $V$ does not lie in the image of the map $\phi:\cP^\hh_8 \to \cW^1_{9,6}$. 
	Indeed, since curves parametrized by $V$ are degeneracy loci of $3\times 3$ matrices, all linear syzygies (as in Proposition \ref{existenceExample} (b)) have rank $3$. Therefore, the corresponding syzygy schemes do not define $K3$ surfaces.
\end{remark}

\begin{remark}
	In \cite{Muk02} Mukai showed that a transversal linear section $\PP^8\cap G(2,6)\subset \PP^{14}$ of the embedded Grassmannian $G(2,6)\subset \PP^{14}$ is a Brill--Noether general $K3$ surface and that every Brill--Noether general $K3$ surface arises in this way. 
	One can show that a very general surface $S\in \cF^\hh_8$ is indeed Brill--Noether general and therefore arises as a transversal linear section of $G(2,6)$. 
	Among other things we will show in a forthcoming work that also the generators of the Picard group $\Pic(S)$ are obtained by taking linear sections of subvarieties inside $G(2,6)$. 
	To be more precise, changing the basis of the lattice $\hh$ to 
	$\{\OsH, \OO_S(Q)=\OO_S(C-H), \OsN \}$, we have $Q=\PP^8 \cap G(2,4)$ and  $N=\PP^8 \cap G(2,5)$ for Grassmannians $G(2,4),  G(2,5)\subset G(2,6)\subset \PP^{14}$ not containing each other. 
\end{remark}

\section{\texorpdfstring{A birational description of $\cW^3_{9,10}$}{}}\label{birationalityBNloci}

The Serre dual of a $g^1_6$ on a general genus $9$ curve $C$ is a $g^3_{10}$ defining an embedding into $\PP^3$.
Let $C'$ be the image of a general genus $9$ curve $C$ under the residual map 
$$
C\overset{|\omega\otimes L^{-1}|}{\longrightarrow} C'\subset \PP^3.
$$
Then all maps in the long exact cohomology sequence induced by the sequence
$$
0\to \cI_{C'/\PP^3}(n)\to \OO_{\PP^3}(n)\to \OO_{C'}(n) \to 0
$$
have maximal rank and $C'$ is contained in a net of quartics whose general element is smooth (see \cite{BH3}). 
Let $\nn$ be the rank $r\geq 2$  Picard lattice of a  very general quartic in this family. We fix a basis $\{n_1,n_2,\dots \}$ for $\nn$ with $n_1^2=4, n_2^2=16$ and $n_1n_2=10$ and consider the moduli space 
$$\cF^\nn_3=\big\{(S,\varphi)\ \big| \ (S,\varphi)\in \cF^\nn \text{ and}\ \OO_S(H')=\varphi(n_1) \text{ ample }\big \}$$
and the open subset of the tautological bundle
$$
\cP^\nn_3=\big\{(S,\varphi,C)\ \big| \ (S,\varphi)\in \cF^\nn_3 \text{ and}\ C\in |\varphi(n_2)| \text{ smooth }\big \}.
$$
We get a dominant map 
$$\cP_3^\nn\to \cW^3_{9,10}\cong \cW^1_{9,6},\ (S,\varphi,C)\mapsto (C,\OO_S(H')\otimes \OO_{C})$$
whose general fiber has dimension $2$.
Now, since 
$$
\dim \cP^\nn_3= \dim \cF_3^\mathfrak{n}+\dim |C'| =(20-r)+9 = \dim \cW^1_{9,6}+2=27,
$$
we see that $\nn$ is a rank $2$ lattice and hence the Picard lattice of a very general
$K3$ surface in $\cF^\nn_3$ is generated by the class of a plane quartic and the class of $C$.
As a consequence of Theorem \ref{mainthm} and Corollary \ref{unirationalityTheorem} we now obtain:

\begin{corollary}
With notation as above $\cP^\nn_3 \to \cW^3_{9,10}$ is a $\PP^2$-bundle over an open subset of $\cW^3_{9,10}$. The general fiber contains a rational curve parametrizing $K3$ surfaces contained in $\cF_8^\hh$.
\end{corollary}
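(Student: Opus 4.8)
The plan is to treat the two assertions separately, realizing both through the net of quartics in $\PP^3$ passing through the residual curve $C'$.

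For the bundle structure I would first identify the general fiber of $\cP^\nn_3 \to \cW^3_{9,10}$. A point over $(C,\omega_C\otimes L^{-1})$ is a triple $(S,\varphi,C)$ with $(S,\varphi)\in\cF^\nn_3$, $\OO_S(H')=\varphi(n_1)$ the ample quartic class, and $\OO_S(H')\otimes\OO_C$ equal to the prescribed $g^3_{10}$; equivalently it is a quartic surface in the intrinsic $\PP^3=\PP(H^0(C,\omega_C\otimes L^{-1})^*)$ containing $C'$, together with the polarization $\varphi$ that for very general $S$ is uniquely determined as in Section \ref{section_lattice_pol_K3}. The maximal-rank property recorded above (see \cite{BH3}) gives $h^0(\cI_{C'/\PP^3}(4))=3$ on a dense open $U\subset\cW^3_{9,10}$, so the fiber is the net $\PP(H^0(\cI_{C'/\PP^3}(4)))\cong\PP^2$. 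This identification is relative: over $U$ one forms the $\PP^3$-bundle attached to the universal $g^3_{10}$, the relative residual curve inside it, and the rank-$3$ sheaf $\mathcal V$ of relative quartics through that curve, whose formation commutes with base change because $h^0(\cI_{C'/\PP^3}(4))\equiv 3$ on $U$. This exhibits $\cP^\nn_3$ as birational to the $\PP^2$-bundle $\PP(\mathcal V)\to U$.

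For the second assertion I would use Theorem \ref{mainthm} and Corollary \ref{unirationalityTheorem}. Under the Serre-duality isomorphism $\cW^1_{9,6}\cong\cW^3_{9,10}$, $(C,L)\mapsto(C,\omega_C\otimes L^{-1})$, the fiber of $\phi:\cP^\hh_8\to\cW^1_{9,6}$ over a general $(C,L)$ is a rational curve parametrizing genus-$8$ surfaces $S\in\cF^\hh_8$ that contain $C$ and satisfy $\OsN\otimes\OO_C=L$. On such an $S$ the class $\OO_S(C-N)$ has $(C-N)^2=16-12+0=4$; it is nef (a lattice check as in Section \ref{section_lattice_pol_K3}), so $h^0(\OO_S(C-N))=4$, and using $h^1(\OO_S(-N))=0$ one checks that restriction to $C$ gives an isomorphism $H^0(\OO_S(C-N))\cong H^0(C,\omega_C\otimes L^{-1})$. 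Hence the morphism $\iota_{C-N}:S\to\PP^3$ defined by $|C-N|$ sends $C$ onto $C'$. As $C'$ lies on no quadric (maximal rank in degree $2$), the image $\iota_{C-N}(S)\supset C'$ cannot be a quadric, so $\iota_{C-N}$ is birational onto a quartic $\bar S$ containing $C'$; thus $\bar S$ is a point of the net above, and, being isomorphic to $S$, it carries the lattice $\hh$ and lies in $\cF^\hh_8$. Since distinct $S$ in the $\phi$-fiber yield distinct quartics through $C'$, this defines a nonconstant map from the rational $\phi$-fiber into the net $\PP^2$, whose image is the asserted rational curve of K3 surfaces in $\cF^\hh_8$ (in fact the plane cubic of the introduction, relevant to Theorem \ref{birationalityTheorem}).

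The main obstacle I anticipate lies in the second assertion: verifying that $\iota_{C-N}$ is birational onto a quartic rather than two-to-one onto a quadric, and that the induced map from the $\phi$-fiber to the net is nonconstant. The birationality I would settle cleanly by the quadric-freeness of $C'$ noted above; the nonconstancy, together with the computation $h^0(\OO_S(C-N))=4$ and the absence of an elliptic class $E$ with $(C-N)\cdot E=2$, reduces to lattice checks in $\hh$, which I would carry out with the Maple computations of \cite{BH2} already invoked in Section \ref{section_lattice_pol_K3}. The $\PP^2$-bundle assertion is by comparison formal, following once the fiber is identified with the net and $h^0(\cI_{C'/\PP^3}(4))$ is seen to be constant on $U$.
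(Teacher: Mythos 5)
Your overall route is the same as the paper's: the corollary is stated there without a separate proof, being read off from the preceding discussion (the maximal-rank computation giving $h^0(\cI_{C'/\PP^3}(4))=3$, the dimension count forcing $\rk\nn=2$, so the fiber is the net of quartics through $C'$) together with Theorem \ref{mainthm} and Corollary \ref{unirationalityTheorem}, which supply the rational curve of genus-$8$ surfaces mapping into that net via their degree-$4$ class. Two of your intermediate claims, however, are wrong as stated. First, $\OO_S(C-N)$ is \emph{not} nef: the class $C-H$ is an effective $(-2)$-class (it has square $-2$ and meets the ample $H$ in degree $2$), and $(C-N)\cdot(C-H)=16-16-6+5=-1<0$. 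So the lattice check you propose to justify nefness would fail. In fact $|C-N|=(C-H)+|H-N|$, i.e.\ $C-H$ is a fixed component and the movable part is $H-N$, which is exactly the class $H'$ the paper uses (cf.\ the sentence ``the image of $S_p$ in $\PP^3$ is given by $|\OO_{S_p}(H_i-N_i)|$''). Your conclusions $h^0(\OO_S(C-N))=4$ and the restriction isomorphism onto $H^0(C,\omega_C\otimes L^{-1})$ do survive, via $0\to\OO_S(-N)\to\OO_S(C-N)\to\omega_C\otimes L^{-1}\to 0$ and $h^1(\OO_S(\pm N))=0$, and the induced map to $\PP^3$ is the one given by $|H-N|$, so the slip is repairable --- but the justification as written is broken.

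Second, the assertion that ``distinct $S$ in the $\phi$-fiber yield distinct quartics through $C'$'' is contradicted by the paper's own Section 4: the image $\Gamma$ of the $\phi$-fiber in the net is a \emph{singular} plane cubic, and the singular point is precisely a quartic hit by two distinct surfaces of the $\phi$-fiber (this is the whole mechanism producing the rank-$4$ lattice $\hh'$). What separates distinct points of the $\phi$-fiber is the polarized pair $(S,\OO_S(H))$, determined by the linear syzygy as in Lemma \ref{bound_fiber_dimension}, not the quartic model. For the corollary you only need the map from the $\phi$-fiber to the net to be nonconstant, and you correctly flag this as the remaining obstacle; but the ``lattice check'' you propose for it is not spelled out, whereas the paper settles it by the explicit computation (Proposition \ref{existenceExample2}) that $\Gamma$ has degree $3$. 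A lattice-theoretic alternative would be to note that a constant map would force a single quartic to carry infinitely many pairwise non-isomorphic $\hh$-polarizations, which one must then rule out --- this is not automatic and needs an argument.
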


\begin{proposition}\label{existenceExample2}
	With notation as above, there exists a pair $(C,\omega_C\otimes L^{-1})\in \cW^3_{9,10}$ such that \begin{enumerate}
		\item[(1)] $V=H^0(\PP^3, \cI_{C'}(4))$ is $3$-dimensional,
		\item[(2)] the plane nodal rational curve $\Gamma$ in $\PP(V)$, whose points correspond to $K3$ surfaces given as syzygy schemes as in Proposition \ref{existenceExample}, has degree $3$ and
		\item[(3)] the abstract $K3$ surface $S_p$ corresponding to the unique singular point $p$ of the rational curve $\Gamma$ has a smooth model in $\PP^3$.
	\end{enumerate}
\end{proposition}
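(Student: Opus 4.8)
The plan is to prove the statement by exhibiting one explicit pair and verifying all three assertions by computer algebra, in the spirit of Proposition \ref{existenceExample}; the computations are carried out in \cite{BH3}. I would start from the example of Proposition \ref{existenceExample}, i.e. a smooth genus $9$ curve $C$ with $L\in W^1_6(C)$ and the associated $K3$ surface, and pass to the residual model $C'\subset \PP^3$ given by $|\omega_C\otimes L^{-1}|$. Recall from the discussion preceding the proposition that on a $K3$ surface $S\in\cF^\hh_8$ containing $C$ adjunction gives $\omega_C=\OO_C(C)$ and $L=\OO_C(N)$, so the residual class equals $\omega_C\otimes L^{-1}=\OO_C(C-N)$; hence the quartic surface cut out on $C'$ is the image of $S$ under $|\OO_S(C-N)|$, a class with $(C-N)^2=4$ and $(C-N)\cdot C=10$. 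Thus the net of quartics through $C'$ is exactly where the $K3$ surfaces in the fiber $\phi^{-1}(C,L)$ are recorded.

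For (1) I would establish $\dim V=3$ from the maximal rank property already stated before the proposition. Since $4(C-N)$ has degree $40>2g-2$, we have $h^0(\OO_{C'}(4))=40-9+1=32$ with $h^1=0$, while $h^0(\OO_{\PP^3}(4))=35$. Maximal rank of the restriction map $H^0(\OO_{\PP^3}(4))\to H^0(\OO_{C'}(4))$ then forces it to be surjective with kernel $h^0(\cI_{C'}(4))=35-32=3$; this is verified for the explicit example in \cite{BH3}.

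For (2) I would parametrize the fiber $\phi^{-1}(C,L)\cong\PP^1$ by the pencil of syzygies in $\OO_\PPE(-3H+2R)^{\oplus 2}$, as in Corollary \ref{unirationalityTheorem}: for $s=[\lambda:\mu]$ the associated syzygy scheme, saturated with the vertex of the scroll, produces the $K3$ surface $S_s\in\cF^\hh_8$ containing $C$, and its image under $|\OO_{S_s}(C-N)|$ is a quartic $q_s$ through $C'$, giving a morphism $\PP^1\to\PP(V)\cong\PP^2$, $s\mapsto q_s$, with image $\Gamma$. Since distinct members of the fiber give distinct $K3$ surfaces and hence generically distinct quartics, the morphism is birational onto $\Gamma$, which I would compute in the explicit example to be a curve of degree exactly $3$. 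A plane rational cubic has arithmetic genus $1$ and geometric genus $0$, hence exactly one singular point $p$ (a node or a cusp); this provides the unique singular point required in (2) and (3).

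For (3), at the point $p\in\Gamma$ I would take the corresponding quartic surface $S_p\subset\PP^3$ and check by the Jacobian criterion that it is smooth, so that $S_p$ is a genuine smooth quartic $K3$ surface (the rank $4$ surface later identified with the lattice $\hh'$). Because smoothness of $S_p$, the value $\dim V=3$, and the condition that $\Gamma$ be an irreducible cubic with a single singular point are all open conditions, one explicit verification suffices for the existence statement. The main obstacle is part (2): one must confirm that the morphism $\PP^1\to\PP^2$ is nonconstant and birational onto a curve of degree exactly $3$, rather than a multiple cover of a curve of lower degree, and then correctly locate $p$ and verify that $S_p$, unlike the nearby rank $3$ surfaces on $\Gamma$, admits a smooth projective model. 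Both points require the chosen example to be sufficiently general, which is ensured by starting from the generic construction of Proposition \ref{existenceExample}.
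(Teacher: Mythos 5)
Your proposal is correct and follows essentially the same route as the paper: the paper's entire proof is a one-line reference to the \emph{Macaulay2} verification in \cite{BH3} of an explicit example, which is exactly what you propose, and your supporting arguments (the Riemann--Roch count giving $\dim V = 35-32=3$, the parametrization of $\Gamma$ by the pencil of syzygies, uniqueness of the singular point of a rational plane cubic, and openness of all three conditions) match the discussion the paper places around the proposition. The only minor imprecision is that the quartic model of $S$ is given by $|\OO_S(H-N)|$ rather than $|\OO_S(C-N)|$ (the latter has the $(-2)$-class $C-H$ as a fixed component), but both restrict to $\omega_C\otimes L^{-1}$ on $C$ and yield the same image surface.
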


\begin{proof}
	We verify the above statement in our \emph{Macaulay2}-script \cite{BH3}.
\end{proof}

In the following we describe the Picard lattice of $S_p$.
Recall that the linear syzygy in the relative canonical resolution of a surface $S$ in $\cF^\hh_8$ determines the polarized $K3$ surface $(S,\OO_S(H))$ uniquely by Proposition \ref{bound_fiber_dimension}. 
Hence, all $K3$ surfaces $(S,\OO_S(H))$ given as syzygy schemes as in Proposition \ref{existenceExample} are non-isomorphic (as polarized $K3$ surfaces). Therefore, to be a nodal point of the rational curve $\Gamma$ means that there are two $K3$ surfaces in $\PP^8$ mapping to the same quartic in $\PP^3$.
In other words, the Picard group $\Pic(S_p)$ contains two (pseudo-) polarizations $\OO_{S_p}(H_1)$ and $\OO_{S_p}(H_2)$ (and corresponding elliptic classes $\OO_{S_p}(N_i)$, $i=1,2$) such that $H_i^2=14$, $H_i.C=16$ and $|\OO_{S_p}(H_1-N_1)|=|\OO_{S_p}(H_2-N_2)|$. 
Note that the image of $S_p$ in $\PP^3$ is given by $|\OO_{S_p}(H_i-N_i)|$.

Since by Section \ref{section_lattice_pol_K3} fixing two basis elements $C$ and $N$ (or equivalently $(H-N)$) for the lattice $\hh$ determines the third class $H$ uniquely, it follows that $\Pic(S_p)$ has rank at least $4$. 

Thus, $\Pic(S_p)$ contains a lattice of the following form 
$$
\begin{pmatrix}
	14 & 16 & 5 & a\\
	16 & 16 & 6 &16 \\
	5 & 6 & 0 &b \\
	a &16 &b & 14
\end{pmatrix}
$$
with respect to an ordered basis $\big\{\OO_{S_p}(H_1), \OO_{S_p}(C), \OO_{S_p}(N_1), \OO_{S_p}(H_2) \big \}$ and $a,b$ integers.
Using that $(C-H_i)$ is a $(-2)$-curve for $i=1,2$, an easy computation yields $a=16$ and $b=6$.
We remark that $H_1.(C-H_2)=0$ and therefore $\OO_{S_p}(H_i)$ does not define an ample class on $S_p$. Hence, the surface $S_p$ lies in the boundary of $\cF^\hh_8$.
Nevertheless, the definition of the moduli space $\cF^\hh$ can be extended to pseudo polarized $K3$ surfaces (see \cite{Do96}).
If we change the basis of the above lattice to 
$$
\big\{\OO_{S_p}(H')=\OO_{S_p}(H_i-N_i), \OO_{S_p}(C), \OO_{S_p}(Q_1)=\OO_{S_p}(C-H_1), \OO_{S_p}(Q_2)=\OO_{S_p}(C-H_2)  \big\}
$$
then the corresponding intersection matrix has the following form
$$
\hh'\sim 
\begin{pmatrix}
4 & 10 & 1 & 1 \\
10 & 16 & 0 & 0 \\
1 & 0 & -2 & 0 \\
1 & 0 & 0 & -2
\end{pmatrix}
$$

We denote by $\hh'$ be the (abstract) rank $4$ lattice which is defined by the intersection matrix above with respect to some fixed basis $\{h_1',h_2',h_3',h_4'\}$. For a lattice polarized $K3$ surface $(S,\varphi)\in \cF^{\hh'}$ we denote
$$
\OO_S(H')=\varphi(h_1'),\ \OO_S(C)=\varphi(h_2'),\  \OO_S(Q_1)=\varphi(h_3') \text{ and } \OO_S(Q_2)=\varphi(h_4').
$$
Again, we will omit referring to the primitive lattice embedding $\varphi:\hh'\to \Pic(S)$ for elements $(S,\varphi)\in \cF^{\hh'}$ and we will say that $\{ \OO_{S}(H'), \OO_{S}(C), \OO_{S}(Q_1), \OO_{S}(Q_2) \}$ forms a basis of $\hh'$.

As for the lattice $\hh$ one can check using \emph{Maple} (see \cite{BH2}) that for a surface 
$S\in \cF^{\hh'}$ with $\Pic(S)=\hh'$ the class $\OO_S(H')$ is ample. We consider again the open subset
$$
\cF^{\hh'}_3:=\big\{S\ \big |\ S\in \cF^{\hh'} \text{ and } \OO_S(H') \text{ ample}\big\}
$$
of the moduli space $\cF^{\hh'}$ and the open subset of the tautological $\PP^9$-bundle over $\cF^{\hh'}_8$
$$
\cP^{\hh'}_3=\big\{ (S, C)\ \big | \ S\in \cF^{\hh'}_3 \ \text{and } C\in |\OO_S(C)| \ \text{smooth}    \big \}.
$$ 
Furthermore, the class $\OO_S(H')$ determines the classes $\OO_S(C),\OO_S(Q_1)$ and $\OO_S(Q_2)$ (with desired intersection numbers) uniquely. Hence, we get generic injections 
$$\cF^{\hh'}_3\hookrightarrow \cF^\hh_3 \hookrightarrow \cF^\nn_3 \hookrightarrow \cF_3$$ 
into the moduli space of polarized $K3$ surfaces of genus $3$.

\begin{theorem}\label{birationalityTheorem}
	The morphism 
	$$
	\phi': \cP^{\hh'}_3 \to \cW^3_{9,10},\ (S,C) \mapsto (C, \OO_S(H')\otimes \OO_C)
	$$
	defines a birational equivalence. In particular $ \cP^{\hh'}_3$, $ \cF^{\hh'}_3$ and $ \cF^{\hh'}$ are unirational.
\end{theorem}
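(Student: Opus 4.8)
The plan is to show that $\phi'$ is a dominant, generically injective morphism between irreducible varieties of the same dimension, from which birationality follows in characteristic zero. First I would record the dimension count. Since $\hh'$ has rank $4$ and signature $(1,3)$, the moduli space $\cF^{\hh'}_3$ has dimension $19-3=16$ by \cite{Do96}, and as $\OO_S(C)^2=16$ the linear system $|\OO_S(C)|$ is a $\PP^9$; hence $\dim \cP^{\hh'}_3 = 16+9 = 25 = \dim \cW^3_{9,10}$. The target $\cW^3_{9,10}\cong\cW^1_{9,6}$ is irreducible, and irreducibility of $\cF^{\hh'}_3$, and therefore of the $\PP^9$-bundle $\cP^{\hh'}_3$, can be obtained for this lattice as in the case of $\cF^\hh_8$ via \cite{Do96}. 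The map $\phi'$ is well defined into $\cW^3_{9,10}$ because $\OO_S(H')\otimes\OO_C$ has degree $H'\cdot C = 10$ and equals the residual series $\omega_C\otimes L^{-1}$, a $g^3_{10}$.

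For dominance I would exploit the $\PP^2$-bundle structure of $\cP^\nn_3 \to \cW^3_{9,10}$ established above: the fiber over a general pair $(C,\omega_C\otimes L^{-1})$ is the net $\PP(V)=\PP^2$ of quartics through the residual curve $C'\subset\PP^3$. Inside this net the surfaces arising as syzygy schemes, i.e. those carrying the rank-$3$ polarization $\hh$, sweep out the plane cubic $\Gamma$ of Proposition \ref{existenceExample2}. A $K3$ surface in the net has Picard lattice containing $\hh'$ precisely when two distinct $\hh$-polarizations $\OO_S(H_1),\OO_S(H_2)$ with common residual class $H'=H_i-N_i$ are realized on it, that is, exactly at a singular point of $\Gamma$; such a surface lies in $\cF^{\hh'}_3$. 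By Proposition \ref{existenceExample2} one such configuration exists, with $\Gamma$ of degree $3$ carrying a singular point whose associated surface $S_p$ has a smooth model in $\PP^3$ and Picard lattice $\hh'$. Since the relevant conditions (the dimension of $V$, the degree of $\Gamma$, and the existence and smoothness of the node) are open, they persist for a general point of the irreducible variety $\cW^3_{9,10}$; hence over a general point the fiber of $\phi'$ is non-empty and $\phi'$ is dominant.

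Generic injectivity is then the heart of the matter. The fiber $\phi'^{-1}(C,\omega_C\otimes L^{-1})$ is naturally identified with the set of quartics in the net $\PP(V)$ whose Picard lattice is at least $\hh'$, i.e. with the singular points of $\Gamma$: as explained before the theorem, fixing the two basis classes $\OO_S(C)$ and $\OO_S(H')$ determines the remaining classes of $\hh'$, so a point of the fiber corresponds to two $\hh$-polarizations sharing the residual $H'$, which forces a node of $\Gamma$, and conversely. Since $\Gamma$ is a plane cubic with a \emph{unique} singular point for a general such configuration by Proposition \ref{existenceExample2}(2), the general fiber of $\phi'$ is a single reduced point. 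Combined with dominance and equality of dimensions, and working in characteristic zero, this yields $k(\cW^3_{9,10}) = k(\cP^{\hh'}_3)$, so $\phi'$ is birational.

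Finally, the unirationality statements follow formally: $\cW^3_{9,10}\cong\cW^1_{9,6}$ is classically unirational (\cite{seg}, \cite{AC81}), so by the birational equivalence $\cP^{\hh'}_3$ is unirational; projecting the $\PP^9$-bundle $\cP^{\hh'}_3\to\cF^{\hh'}_3$, which is dominant, exhibits $\cF^{\hh'}_3$ as a dominant image of a unirational variety, hence unirational, and $\cF^{\hh'}$ is birational to its open subset $\cF^{\hh'}_3$, so it is unirational as well. The step I expect to be the main obstacle is the generic injectivity: verifying that the cubic $\Gamma$ carries a single node and that this node is exactly the locus of $\hh'$-polarized surfaces. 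The existence of one such example is the computational content of Proposition \ref{existenceExample2}, and the difficulty is to propagate the uniqueness of the node to the general fiber by semicontinuity while matching it lattice-theoretically with the rank-$4$ polarization $\hh'$.
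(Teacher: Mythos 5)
Your overall strategy coincides with the paper's: the same dimension count $\dim \cP^{\hh'}_3 = 16+9 = 25$, the identification of the fiber of $\phi'$ over $(C,\omega_C\otimes L^{-1})$ with the singular points of the plane cubic $\Gamma\subset\PP(V)$ (via the lattice argument that two $\hh$-polarizations sharing the residual class $H'$ force rank $4$), dominance from irreducibility plus generic finiteness, and unirationality pulled back from $\cW^1_{9,6}$. The paper in fact gets dominance slightly more cheaply than you do, by upper semicontinuity of fiber dimension together with the dimension count, rather than by arguing that a node of $\Gamma$ persists over a general point.

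The one place where your argument does not close is exactly the step you flag: propagating the uniqueness of the singular point of $\Gamma$ from the single example of Proposition \ref{existenceExample2} to the general fiber. ``Semicontinuity'' and ``openness'' do not deliver this: neither the condition $\deg\Gamma=3$ nor the existence of a (unique) node is an open condition in a family of plane curves --- the general $\Gamma$ could a priori be smooth (killing dominance of $\phi'$ through $\Sing(\Gamma)$), or reducible (e.g.\ a conic plus a line, giving two singular points and destroying generic injectivity). The paper resolves this by a further \emph{generic} computation in \cite{BH3}: the fiber of $\widetilde{\phi}$ is the one-dimensional family of syzygy schemes cut out by maximal Pfaffians of $5\times 5$ skew-symmetric matrices, and working with indeterminate coefficients one verifies that this family always traces out an irreducible rational plane cubic; an irreducible rational cubic has exactly one singular point, which gives generic injectivity. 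So your skeleton is the right one, but the decisive input is a computation over a generic point (or some equivalent irreducibility/monodromy argument), not a semicontinuity statement.
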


\begin{proof}
We proceed as in the proof of Theorem \ref{mainthm}.
 By Proposition \ref{existenceExample2} and the preceeding discussion there exists a pair $(C,\omega_C\otimes L^{-1})\in \cW^3_{9,10}$ in the image of the map $\phi'$. Furthermore, every point in the fiber corresponds to a singular point of rational curve $\Gamma$ as in Proposition \ref{existenceExample2}. 
 Indeed, the spaces $\cF^\hh_8$ and 
 $\cF^\hh_3:=\{S\ |\ S\in \cF^{\hh} \text{ and } \OO_S(H')=\varphi(h_1-h_3) \text{ ample} \}$ 
 are birational (the mapping $\OO_S(H)\mapsto \OO_S(H-N)$ is defined on an open subset and for a very general $K3$ surface $S\in \cF^\hh$, it is equivalent to choose a polarization $\OO_S(H)$ or a polarization $\OO_S(H-N)$). 
 Therefore, by Theorem \ref{mainthm} and Corollary \ref{unirationalityTheorem} we get a dominant morphism $\widetilde{\phi}:\cP^\hh_3\to \cW^3_{9,10}$ whose fibers are rational curves which we identify with $\Gamma$. Hence, the fiber of $\phi'$ is contained in the fiber of the map $\widetilde{\phi}$ and we get the following diagram  
 $$
 \begin{xy}
  \xymatrix{
  \Sing(\Gamma)\ \ar@{^(->}[r] \ar@{|->}[rdd]_{\phi'} & \Gamma\  \ar@{^(->}[r] \ar@{|->}[dd]_{\widetilde{\phi}} & \PP^2  \ar@{|->}[ldd] & & \cP^{\hh'}_3\  \ar@{^(-->}[r] \ar[rdd]_{\phi'} & \cP^\hh_3\  \ar@{^(-->}[r] \ar[dd]_{\widetilde{\phi}} & \cP^\nn_3 \ar[ldd]^{\PP^2-\text{bundle}} \\  
  & &  \mathlarger{\ar@{^(-->}[rr]} & &  & & \\
  & (C,\omega_C\otimes L^{-1}) & & \in & & \ \ \ \ \cW^3_{10,9} \ \ \ \ & 
  }
 \end{xy}
 $$
 The dimension of $\cP^{\hh'}_3$ is 
 $$
 \dim \cP^{\hh'}_3 = 20 - \rk( \hh') + g(C) = 16 + 9 = 25 = \dim \cW^3_{9,10}
 $$
 and both spaces are irreducible.
 Thus, by upper-semicontinuity on the fiber dimension the map $\phi'$ is generically finite and dominant. It remains to show the generic injectivity.
 
 In the example of Proposition \ref{existenceExample2} the fiber of $\widetilde{\phi}$ is a rational plane cubic, and hence, has a unique singular point. In the last part of our \emph{Macaulay2}-file \cite{BH3} we verify that this is the generic behaviour: 
 A general pair $(C,\omega_C\otimes L^{-1})$ in the image of $\widetilde{\phi}$ gives rise to an unbalanced relative canonical resolution as in Proposition \ref{existenceExample}. The rational curve of $K3$ surfaces given as the fiber of  $\widetilde{\phi}$ corresponds to a one-dimensional family of generic syzygy schemes cut out by the maximal Pfaffians of $5\times 5$ skew-symmetric matrices. We show that the generic one-dimensional family of such matrices always induces a rational cubic (see also the explanation in \cite[Part 6]{BH3}). 
 We conclude that the dominant morphism $\phi': \cP^{\hh'}_3\to \cW^3_{9,10}$ is generically injective and therefore defines a birational equivalence.
\end{proof}


\end{document}